\def\MT@register@subst@font{\MT@exp@one@n\MT@in@clist\font@name\MT@font@list
   \ifMT@inlist@\else\xdef\MT@font@list{\MT@font@list\font@name,}\fi}
\theoremstyle{plain}
\newtheorem{theorem}{Theorem}
\newtheorem{lemma}[theorem]{Lemma}
\newtheorem{corollary}[theorem]{Corollary}
\newtheorem{proposition}[theorem]{Proposition}
\newtheorem{conjecture}{Conjecture}
\theoremstyle{definition}
\newcommand{\setbuilder}[2]{\left\{#1\;\middle|\;#2\right\}}
\newcommand{\rank}{\operatorname{rank}}
\newcommand{\epsi}{\varepsilon}
\newcommand{\norm}[1]{\left\lVert#1\right\rVert}
\newcommand{\ipr}[2]{\left\langle #1, #2 \right\rangle}
\newcommand{\card}[1]{\left\lvert#1\right\rvert}
\newcommand{\numbersystem}[1]{\mathbb{#1}}
\newcommand{\bN}{\numbersystem{N}}
\newcommand{\bR}{\numbersystem{R}}
\DeclareMathOperator{\aff}{aff}
\DeclareMathOperator{\lin}{span}
\title{Almost-equidistant sets\thanks{M. Balko has received funding from European Research Council (ERC) under the European Union's Horizon 2020 research and innovation programme under grant agreement no.~678765.
The authors all acknowledge the support of the ERC Advanced Research Grant no.~267165 (DISCONV).
M.~Balko and P.~Valtr acknowledge support of the grant P202/12/G061 of the Czech Science Foundation (GA\v CR). 
M.~Scheucher acknowledges support from TBK Automatisierung und Messtechnik GmbH.
}}
\author{Martin Balko\\
\small Department of Applied Mathematics and Institute for Theoretical Computer Science,\\[-0.8ex]
\small Faculty of Mathematics and Physics, \\[-0.8ex]
\small Charles University, Prague, Czech Republic\\
\small\texttt{balko@kam.mff.cuni.cz}\\
\small Department of Computer Science, Faculty of Natural Sciences, \\[-0.8ex]
\small Ben-Gurion University of the Negev, Beer~Sheva, Israel\\
\and 
Attila P\'or\\
\small Department of Mathematics,\\[-0.8ex]
\small Western Kentucky University, \\[-0.8ex]
\small Bowling Green, KY 42101\\
\small\texttt{attila.por@wku.edu}\\
\and
Manfred Scheucher\\
\small  Institut f\"ur Mathematik,  \\[-0.8ex]
\small  Technische Universit\"at Berlin,  \\[-0.8ex]
\small  Germany \\
\small\texttt{scheucher@math.tu-berlin.de}\\
\and
Konrad Swanepoel\\ 
\small Department of Mathematics,  \\[-0.8ex]
\small London School of Economics and Political Science,  \\[-0.8ex]
\small London, United Kingdom \\
\small\texttt{k.swanepoel@lse.ac.uk}\\
\and
Pavel Valtr\\
\small Department of Applied Mathematics and Institute for Theoretical Computer Science,\\[-0.8ex]
\small Faculty of Mathematics and Physics, \\[-0.8ex]
\small Charles University, Prague, Czech Republic
}
\date{}
\begin{document}
\maketitle

\begin{abstract}
For a positive integer $d$, a set of points in $d$-dimensional Euclidean space is called \emph{almost-equidistant} if for any three points from the set, some two are at unit distance.
Let $f(d)$ denote the largest size of an almost-equidistant set in $d$-space.

It is known that $f(2)=7$, $f(3)=10$, and that the extremal almost-equidistant sets are unique.
We give independent, computer-assisted proofs of these statements.
It is also known that $f(5) \ge 16$.
We further show that $12\leq f(4)\leq 13$, $f(5)\leq 20$, $18\leq f(6)\leq 26$, $20\leq f(7)\leq 34$, and $f(9)\geq f(8)\geq 24$. 
Up to dimension $7$, our work is based on various computer searches, and in dimensions $6$ to $9$, we give constructions based on the known construction for $d=5$.

For every dimension $d \ge 3$, we give an example of an almost-equidistant set of $2d+4$ points in the $d$-space and we prove the asymptotic upper bound $f(d) \le O(d^{3/2})$.
\end{abstract}

\section{Introduction and our results}

For a positive integer $d$, we denote the $d$-dimensional Euclidean space by $\bR^d$.
A set $V$ of (distinct) points in $\bR^d$ is called \emph{almost equidistant} if among any three of them, some pair is at distance $1$.
Let $f(d)$ be the maximum size of an almost-equidistant set in $\bR^d$.
For example, the vertex set of the well-known Moser spindle (Figure~\ref{fig1}) is an almost-equidistant set of~$7$ points in the plane and thus $f(2) \ge 7$.

In this paper we study the growth rate of the function $f$.
 We first consider the case when the dimension $d$ is small and give some 
almost tight estimates on $f(d)$ for $d \le 9$.
Then we turn to higher dimensions and show $2d+4 \le f(d) \le O(d^{3/2})$.
We also discuss some possible generalisations of the problem.
\begin{figure}
\centering
\includegraphics[scale=0.5]{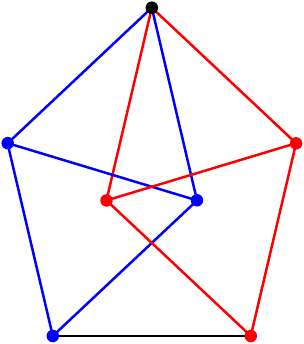}
\caption{The Moser spindle.}
\label{fig1}
\end{figure}

\subsection{Our results in low dimensions}

It is trivial 
that $f(1)=4$ and that, up to congruence, there is a unique almost-equidistant set on $4$ points in $\bR$.
Bezdek, Nasz\'odi, and Visy~\cite{Bezdek-Naszodi-Visy} showed that an almost-equidistant set in the plane has at most $7$ points.
Talata (personal communication) showed in 2007 that there is a unique extremal set.
We give a simple, computer-assisted proof of this result.

\begin{theorem}[Talata, 2007]
\label{thm:2d}
The largest number of points in an almost-equidistant set in~$\bR^2$ is~$7$, that is, $f(2)=7$.
Moreover, up to congruence, there is only one planar almost-equidistant set with $7$ points, namely the Moser spindle.
\end{theorem}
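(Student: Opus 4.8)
The plan is to turn the problem into a finite combinatorial search that a computer can verify exhaustively. The key observation is that an almost-equidistant set $V$ in $\bR^2$ has a natural graph structure: let $G$ be the \emph{unit-distance graph} on $V$, with an edge between two points exactly when they are at distance $1$. The almost-equidistant condition says precisely that among any three points of $V$, at least one pair is an edge; equivalently, the complement $\overline{G}$ is triangle-free. So the first step is: any almost-equidistant set on $n$ points gives a graph $G$ on $n$ vertices whose complement is triangle-free, and conversely, we only need to decide which such graphs are \emph{realisable} as unit-distance graphs in the plane with all non-edges having distance $\neq 1$.

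First I would establish an a priori bound to bootstrap the search. Since $\overline{G}$ is triangle-free, by Ramsey-type/Turán considerations the independence number of $G$ (= clique number of $\overline{G}$) is at most $2$, so $G$ has independence number $\le 2$; combined with the fact that a unit-distance graph in the plane has bounded clique number (no $K_4$, since $\bR^2$ contains no regular simplex on $4$ points — at most a rhombus, and in fact no $K_4$ as a unit-distance graph), one gets via Ramsey's theorem $R(3,4)=9$ that $n \le 8$, or with a little more care $n\le 7$ for \emph{realisable} configurations. Actually, to be safe and self-contained, I would just take a crude bound like $n \le R(4,3)-1 = 8$ as the starting point and let the computation rule out $n=8$.

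Next, for each $n$ from $7$ up to the a priori bound, enumerate all graphs $G$ on $n$ vertices with $\overline{G}$ triangle-free, up to isomorphism (there are few; this is a small \texttt{nauty}-style enumeration). For each candidate $G$, decide realisability: set up the system of polynomial equations $\norm{p_i - p_j}^2 = 1$ for edges $ij$, together with the open conditions $\norm{p_i-p_j}^2 \ne 1$ for non-edges, modulo the isometry group of the plane (fix one point at the origin and another on a coordinate axis). This is a semialgebraic feasibility problem; one can attack it either by a symbolic solver (Gröbner bases / cylindrical algebraic decomposition), or, since the number of unknowns is small, by parametrising the degrees of freedom and solving numerically, then certifying the isolated solutions with interval arithmetic. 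The output should be: for $n=8$ no $G$ is realisable, and for $n=7$ exactly one $G$ is realisable, and its realisation is rigid (up to congruence), namely the Moser spindle. Proving rigidity amounts to checking that the solution set of the equation system, after quotienting by congruences, is a single point — which falls out of the same computation.

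The main obstacle I expect is the realisability/non-realisability step, specifically turning \emph{numerical} non-existence of a solution into a rigorous \emph{proof} of non-existence (for the $n=8$ cases and for all but one of the $n=7$ candidates). Numerically failing to find a point in a semialgebraic set does not prove it is empty. To make this rigorous I would either (i) use exact symbolic elimination to show the polynomial ideal together with the inequality constraints has empty real variety, or (ii) for each candidate graph, isolate a small \emph{rigid subgraph} that already forces a contradiction (e.g.\ a subconfiguration that would have to be an equilateral triangle \emph{and} something incompatible), reducing the whole case to finitely many low-degree systems whose emptiness can be certified by hand or by interval Newton methods. The uniqueness of the surviving $n=7$ configuration then also needs the realisation to be \emph{tight}: one shows the remaining non-edge lengths are all forced $\neq 1$ in the unique embedding, so no point can be added and no coincidence of distances spoils almost-equidistance — again a finite check in the explicit coordinates of the Moser spindle.
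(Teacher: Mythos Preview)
Your plan is correct and follows essentially the same architecture as the paper's proof: enumerate candidate graphs with triangle-free complement, then decide realisability. The one substantive refinement you are missing is an extra combinatorial filter. Besides forbidding $K_4$, the paper also forbids $K_{2,3}$ as a subgraph of the unit-distance graph, since two unit circles in the plane meet in at most two points; this is packaged into the notion of an \emph{abstract almost-equidistant graph in $\bR^2$} (Lemma~\ref{lemma:abstractGraphs}). With both filters in place, the computer search returns only two candidate graphs on $7$ vertices (the Moser spindle and the square antiprism minus a vertex) and a single graph on $8$ vertices (the square antiprism), and none on $9$ or more. Non-realisability of the antiprism-minus-a-vertex is then a short by-hand argument, and the $8$-vertex case follows because it contains that graph. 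So the obstacle you correctly identify---turning numerical non-realisability into a rigorous proof---is sidestepped entirely: the $K_{2,3}$ filter prunes the list so aggressively that no Gr\"obner bases, CAD, or interval certification are needed. Your heavier machinery would work, but the paper's route is both shorter and cleaner.
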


Figure~\ref{fig2} shows an example of an almost-equidistant set of $10$ points in $\bR^3$.
\begin{figure}
\centering
\includegraphics[scale=0.5]{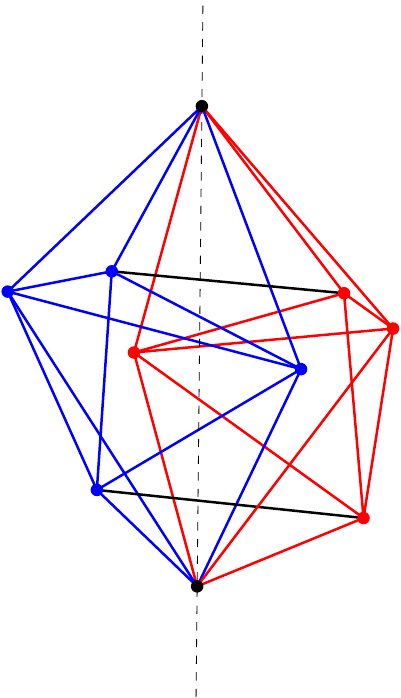}
\caption{An almost-equidistant set in $\bR^3$ made of two biaugmented tetrahedra.}
\label{fig2}
\end{figure}
It is made by taking a so-called \emph{biaugmented tetrahedron}, which is a non-convex polytope formed by gluing three unit tetrahedra together at faces, and rotating a copy of it along the axis through the two simple vertices so that two additional unit-distance edges are created.
This unit-distance graph is used in a paper of Nechushtan~\cite{Nechushtan} to show that the chromatic number of $\bR^3$ is at least $6$.
Gy\"orey~\cite{Gyorey2004} showed, by an elaborate case analysis, that this is the unique largest almost-equidistant set in dimension~$3$.
We again give an independent, computer-assisted  proof.

\begin{theorem}[Gy\"orey~\cite{Gyorey2004}]
\label{thm:3d}
The largest number of points in an almost-equidistant set in~$\bR^3$ is~$10$, that is, $f(3)=10$.
Moreover, up to congruence, there is only one almost-equidistant set in $\bR^3$ with $10$ points.
\end{theorem}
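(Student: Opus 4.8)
The plan is to recast the problem in graph-theoretic language and then settle it by an exhaustive, computer-assisted case analysis. For an almost-equidistant set $V\subseteq\bR^3$ let $G$ be its \emph{unit-distance graph}, with vertex set $V$ and with edges the pairs of points at distance exactly $1$. The defining property is precisely that $\overline{G}$ is triangle-free, i.e.\ that $G$ has independence number at most $2$. Two elementary geometric facts restrict $G$ further. First, $G$ contains no $K_5$ (five points at pairwise distance $1$ would span a regular $4$-simplex, which does not fit in $\bR^3$), and in fact no $K_4$ has a common neighbour: the circumradius of a regular unit tetrahedron is $\sqrt{3/8}\neq 1$, so no point of $\bR^3$ lies at distance $1$ from all four of its vertices. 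Second, for each vertex $v$ the neighbourhood $N(v)$ lies on a sphere of radius $1$, and on such a sphere the unit-distance graph has clique number at most $3$ (again because a unit tetrahedron has circumradius $\neq 1$); since $G[N(v)]$ also has independence number at most $2$, the Ramsey bound $R(3,4)=9$ gives $\card{N(v)}\le 8$. Conversely, the non-neighbours of $v$ form a clique in $G$ (otherwise two of them together with $v$ would be an independent triple), so there are at most $\omega(G)\le 4$ of them, whence $\deg(v)\ge n-5$ with $n:=\card V$. Combining the two bounds yields $n\le 13$, which already gives $f(3)\le 13$.

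I would then, for each $n\in\{11,12,13\}$, enumerate up to isomorphism all graphs on $n$ vertices with independence number at most $2$ --- equivalently, generate all triangle-free graphs on $n$ vertices (for instance with \texttt{nauty}) and complement them --- and discard every one that violates a combinatorial necessary condition derived above: $\omega(G)\le 4$, $n-5\le\deg(v)\le 8$ for all $v$, no vertex adjacent to all vertices of a $K_4$, and the (small, local) requirement that the configuration induced on each $N(v)$ be realizable by distinct points on a $2$-sphere of radius $1$ with the prescribed unit distances. This should leave only a short list of candidate graphs.

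For each surviving candidate I would attempt to build a realization and show it is impossible. Since $n>R(3,4)-1=8$, the graph contains a copy of $K_4$; fix it as a regular unit tetrahedron $abcd$ with explicit coordinates over $\mathbb{Q}(\sqrt{2})$. Every other vertex $v$ is adjacent to at most three of $a,b,c,d$. If it is adjacent to exactly three, say to the face $\set{a,b,c}$, then $v$ lies on the line through the circumcentre of that face perpendicular to $\aff\set{a,b,c}$, at one of the two positions at distance $1$ from $a,b,c$; one of these is $d$, so $v$ is forced to be the reflection of $d$ in $\aff\set{a,b,c}$ --- a single determined point. If $v$ is adjacent to two (respectively at most one) of $a,b,c,d$ it ranges over a determined circle (respectively sphere). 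I would propagate these constraints, re-rooting the analysis at each newly pinned-down vertex and using the fact that every vertex has at most four non-neighbours to ensure that each not-yet-placed vertex is adjacent to many already-placed ones and hence has little or no freedom left. All squared distances that arise lie in a fixed real extension field of $\mathbb{Q}$, so every consistency test --- a forced unit distance on a non-edge, a forced coincidence of two vertices, or an inconsistent Cayley--Menger equation --- is an exact, finite computation. If every candidate on $11$, $12$, or $13$ vertices is refuted in this way, then $f(3)\le 10$; together with the configuration of Figure~\ref{fig2}, which realizes $10$ points, this gives $f(3)=10$. Running the same propagation for $n=10$ should show that the realization, when it exists, is determined up to congruence, establishing the uniqueness of the extremal set.

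The main obstacle is to keep the case analysis finite and certifiably complete in the branches where a vertex is adjacent to at most two vertices of the current anchoring simplex, since then its locus is a one- or two-parameter family and the branching does not terminate on its own. I would address this by always anchoring on the (partial) sub-configuration that maximises the number of adjacencies to the still-unplaced vertices, iterating the re-rooting so that after boundedly many vertices are placed the rest are over-determined, and closing the remaining continuous cases by eliminating the free parameters from the exact distance equations.
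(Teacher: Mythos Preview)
Your high-level strategy --- enumerate the combinatorially admissible graphs by computer and then refute realisability in $\bR^3$ case by case --- is the same as the paper's. The substantive difference is in the combinatorial filter. The paper observes (Lemma~\ref{lemma:abstractGraphs}) that $K_{3,3}$ cannot occur as a unit-distance graph in $\bR^3$: if $\{a_1,a_2,a_3\}$ and $\{b_1,b_2,b_3\}$ are two triples with every $a_i$ at unit distance from every $b_j$, then each triple lies on a circle and the two affine $2$-planes are orthogonal, forcing the configuration into dimension at least $4$. Adding ``no $K_5$, no $K_{3,3}$'' to ``triangle-free complement'' cuts the search dramatically: there is exactly one such graph on $11$ vertices and none on $12$ or more, and only four minimal ones on $10$ vertices. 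Each of these five graphs is then dispatched by a short bespoke argument (a symmetry/fixed-point argument for the $11$-vertex circulant; explicit coordinates, an octahedron-width count, and a tangent-sphere contradiction for the three non-realisable $10$-vertex graphs).

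Your filters --- $\omega(G)\le 4$, the degree window $n-5\le\deg(v)\le 8$, and local realisability of $G[N(v)]$ on a sphere --- are correct but strictly weaker; in particular none of them excludes $K_{3,3}$, since $K_{3,3}$ has $\omega=2$, is $3$-regular, and its restriction to any neighbourhood is an independent triple, trivially placeable on a sphere. Without the $K_{3,3}$ exclusion you will not get down to a handful of candidates, and your generic anchor-and-propagate scheme then has to do real work in exactly the regime you flag as the main obstacle (vertices with only one or two placed neighbours, leaving circles or spheres of freedom). That obstacle is genuine: the paper does not solve it in general either --- it simply never faces it, because the stronger filter leaves so few graphs that ad hoc arguments suffice. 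I would add the $K_{3,3}$ exclusion to your filter list; after that your plan coincides with the paper's and the remaining geometry is short.
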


In dimension $4$, we have only been able to obtain the following bounds.

\begin{theorem}
\label{thm:4d}
The largest number of points in an almost-equidistant set in $\bR^4$ is either $12$ or $13$, that is, $f(4) \in \{12,13\}$.
\end{theorem}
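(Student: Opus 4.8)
Both directions are naturally attacked through the graph-theoretic reformulation that underlies the whole paper. For $V\subset\bR^4$ let $G=G(V)$ be its \emph{unit-distance graph}, having an edge between two points exactly when they are at distance $1$; then $V$ is almost-equidistant if and only if the complement $\overline{G}$ is triangle-free. Two geometric facts will be used throughout. First, a set of pairwise-unit-distance points in $\bR^4$ has at most $5$ elements (it is a regular simplex), so $\omega(G)\le 5$. Second, for every $v\in V$ the points lying at non-unit distance from $v$ must be pairwise at unit distance from one another, since otherwise two of them would form together with $v$ a triple containing no unit pair; hence $v$ has at most $5$ such points and $\deg_G(v)\ge\card{V}-6$.

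For the lower bound $f(4)\ge 12$ it is enough to exhibit a single almost-equidistant set of $12$ points in $\bR^4$ and verify the property directly. Such a configuration can be found by a computer search: one enumerates triangle-free graphs $H$ on $12$ vertices as candidate complements and, for each, tries to realise $\overline{H}$ by $12$ distinct points of $\bR^4$ with all edges at distance $1$ --- for instance by pinning a maximal clique of $\overline{H}$ as a regular simplex with explicit coordinates, solving the resulting system of quadratic distance equations numerically, and then recognising exact algebraic coordinates. Alternatively, in the spirit of the Moser spindle and of the biaugmented tetrahedron of Figure~\ref{fig2}, the set can be built by hand from a rigid almost-equidistant core in $\bR^4$ together with a rotated copy of part of it, creating a few additional unit distances; this is the $d=4$ instance of the general construction on $2d+4$ points. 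Either way, checking that each of the $\binom{12}{3}$ triples contains a unit pair and that the $12$ points are distinct is a finite computation.

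For the upper bound $f(4)\le 13$, suppose $V$ were almost-equidistant with $\card{V}=14$ and let $G$ be its unit-distance graph. The degree bound gives $\deg_G(v)\ge 8$ for every $v$, so $\overline{G}$ is a triangle-free graph on $14$ vertices of maximum degree at most $5$, and moreover $\alpha(\overline{G})=\omega(G)\le 5$. There are only finitely many such graphs and they can be enumerated up to isomorphism with a graph-generation tool (one may further discard every $G$ containing a small subgraph already known not to embed in $\bR^4$, for instance by the rigidity of the unit $4$-simplex, which speeds up the search). It then suffices to rule out, for each surviving candidate $G$, a realisation by $14$ distinct points of $\bR^4$ in which every edge of $G$ has length $1$: such a point set would automatically be almost-equidistant, because its unit-distance graph contains $G$ and hence has a complement that, being a subgraph of the triangle-free $\overline{G}$, is also triangle-free. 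For each candidate I would write down the system $\norm{p_i-p_j}^2=1$ for $ij\in E(G)$, normalise away the isometries by pinning a maximal clique as an explicit regular simplex, and certify that the system has no solution in $\bR^4$ with the $p_i$ distinct --- using that the $5\times 5$ minors of the Gram matrix of the points all vanish (it has rank at most $4$), which together with the distance and normalisation equations yields a polynomial system that one shows inconsistent by exact linear algebra and Cayley--Menger relations, Gr\"obner-basis elimination, or an interval-arithmetic or Positivstellensatz certificate localised at each numerically found near-solution. Since every almost-equidistant $14$-point set yields one of the enumerated graphs, this gives $f(4)\le 13$.

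The main obstacle is the realisability step: converting numerical evidence of non-realisability into a genuine infeasibility certificate for each of the many $14$-vertex candidates, and correctly discarding the degenerate solutions in which two points coincide, which do not produce an honest $14$-point set. A collapse into a hyperplane causes no difficulty, since a $14$-point almost-equidistant set inside a $3$-flat would contradict Theorem~\ref{thm:3d}. Finally, the reason the theorem leaves a gap is presumably that the same procedure run one dimension lower, on the $13$-vertex candidates, fails to certify non-realisability for at least one graph, so that neither $f(4)=12$ nor $f(4)=13$ can be decided by these methods.
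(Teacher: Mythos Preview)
Your overall plan is correct and matches the paper's at the top level: the lower bound is exactly the $2d+4$ construction of Theorem~\ref{thm:lowerbound}, and the upper bound proceeds by enumerating the finitely many candidate unit-distance graphs on $14$ vertices and ruling each out geometrically. The execution differs in two substantive ways.

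First, the paper filters candidates more aggressively than you do. Besides forbidding $K_6$ and triangles in the complement, Lemma~\ref{lemma:abstractGraphs} also forbids the complete multipartite graph $K_3(1,3,3)$ as a subgraph, encoding the fact that three mutually orthogonal $2$-flats cannot fit into $\bR^4$. With this extra constraint only four graphs on $14$ vertices survive, three of them minimal; your Ramsey-type constraints alone (no $K_6$, triangle-free complement, minimum degree $\ge 8$) would leave a substantially larger family to process.

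Second, the paper does not attack realisability with generic Gr\"obner, interval-arithmetic, or Positivstellensatz machinery. It instead isolates a single ten-vertex obstruction graph $G_{10}$, proves by a short explicit coordinate computation (Lemma~\ref{lemma:nonrealisable}) that $G_{10}$ is non-realisable in $\bR^4$, and observes that two of the three minimal $14$-vertex candidates contain $G_{10}$. The remaining candidate $G_{14}$ is a circulant and is dispatched by a symmetry argument: any realisation would admit an isometry cycling the vertices, forcing them onto a common sphere, which contradicts a direct check that six of them are not cospherical. Your black-box certificate per candidate is a legitimate strategy, but it is heavier and offers no guarantee of termination, whereas the paper's route through one reusable non-realisable subgraph plus one symmetry argument is both short and human-checkable. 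Your diagnosis of why the gap at $13$ remains is accurate.
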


The lower bound comes from a generalization of the example in Figure~\ref{fig2}; see also Theorem~\ref{thm:lowerbound}.
The proofs of the upper bounds in the above theorems are computer assisted.
Based on some numerical work to find approximate realisations of graphs, we believe, but cannot prove rigorously, that there does not exist an almost-equidistant set of $13$ points in $\bR^4$.
\begin{conjecture}
The largest number of points in an almost-equidistant set in $\bR^4$ is $12$, that is, $f(4) =12$.
\end{conjecture}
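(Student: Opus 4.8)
\medskip
The plan is to prove the still-missing upper bound $f(4)\le 12$; the matching lower bound $f(4)\ge 12$ is already supplied by Theorem~\ref{thm:lowerbound}. Equivalently, one must rule out the existence of a $13$-point almost-equidistant set $V\subseteq\bR^4$. Pass to the unit-distance graph $G$ on $V$. The almost-equidistant property is exactly the statement that the complement $\overline G$ is triangle-free, i.e.\ $\alpha(G)\le 2$, and two geometric observations constrain $G$ further: a clique of $G$ is a set of points pairwise at distance $1$, hence a regular unit simplex, so $\omega(G)\le 5$; and, more importantly, $G$ must actually be realisable as a unit-distance graph in $\bR^4$. So the task splits into a combinatorial part (list the graphs $G$ on $13$ vertices meeting all the necessary conditions) and a geometric part (show each of them is non-realisable).

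For the combinatorial part, note that $\omega(G)\le 5$ means $H:=\overline G$ is a triangle-free graph on $13$ vertices with $\alpha(H)\le 5$; since $R(3,6)=18>13$ there are only finitely many such $H$ and they can be enumerated by computer (as the authors did). Several are discarded at once: the dense ones near $K_{6,7}$ have complements containing $K_6$ or $K_7$, and $H=C_{13}$ has complement containing $K_6$, none of which embeds in $\bR^4$. One also knows $\overline G$ cannot be bipartite, for then $V$ would be covered by two cliques of $G$ and $13=|V|\le 5+5$; combined with the Andr\'asfai--Erd\H os--S\'os theorem (a triangle-free graph on $13$ vertices with minimum degree $>2\cdot 13/5$ is bipartite) this forces some vertex $v$ with $\deg_G(v)\ge 7$, so at least seven points of $V$ lie on the unit sphere $S^3$ centred at $v$ and inherit an almost-equidistant structure there. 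Further pruning should come from bounds on the number of $K_5$'s of $G$ and from the vanishing of Cayley--Menger determinants on $6$-point subsets on which $G$ is sufficiently dense; optimistically this leaves only a short list of candidate graphs.

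The main step, and the genuine obstacle, is the geometric part: for each surviving candidate $G$ one must prove that the system of quadratic equations $\norm{p_i-p_j}^2=1$ over the edges $ij$ of $G$, with the $p_i$ ranging over $\bR^4$, has \emph{no} real solution. Fixing a base point at the origin and modding out the isometry group, this is a feasibility question about a real algebraic set in roughly $42$ variables cut out by several dozen quadrics (equivalently, a semidefinite feasibility problem with a rank-at-most-$4$ constraint on a Gram matrix of order $12$). Numerically one checks that Newton-type iterations never converge to a realisation --- this is precisely the evidence behind the conjecture --- but a rigorous proof needs \emph{exact} infeasibility certificates, for instance Gr\"obner-basis or cylindrical-algebraic-decomposition computations, or sum-of-squares / Positivstellensatz certificates, for systems near or beyond the practical reach of current exact solvers. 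I expect that closing this gap will require either a much sharper combinatorial reduction leaving only a handful of truly hard candidate graphs, or a substantial advance in rigorous real-algebraic computation; this is why the statement is still only a conjecture.
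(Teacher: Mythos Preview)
This statement is a \emph{conjecture} in the paper, not a theorem: the authors say explicitly that they ``believe, but cannot prove rigorously, that there does not exist an almost-equidistant set of $13$ points in $\bR^4$.'' There is therefore no proof in the paper to compare your proposal against, and you evidently recognise this yourself, since your last paragraph concedes that the geometric infeasibility step is ``near or beyond the practical reach of current exact solvers'' and ends with ``this is why the statement is still only a conjecture.'' So what you have written is not a proof, nor do you claim it is; it is a programme.

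That programme is the right shape and matches the paper's methodology for the weaker bound $f(4)\le 13$: enumerate the candidate graphs, reduce to minimal ones, then certify non-realisability. Two remarks on where your sketch and the paper's actual data diverge. First, the combinatorial step is already finished and is sharper than your Ramsey/Andr\'asfai--Erd\H{o}s--S\'os pruning suggests: the paper's definition of abstract almost-equidistant graph in $\bR^4$ forbids not only $K_6$ but also $K_{1,3,3}$ (Lemma~\ref{lemma:abstractGraphs}), and the computer search in Table~\ref{tab:abstractMinimal} reports exactly $12$ minimal such graphs on $13$ vertices. Second, the obstacle you name is precisely the one the authors hit: they handled the three minimal $14$-vertex graphs by ad hoc geometric arguments (Lemma~\ref{lemma:nonrealisable} and the $G_{14}$ isometry trick), but for the twelve $13$-vertex graphs they have only numerical evidence, no rigorous certificates. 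Your proposal does not close that gap, and neither does the paper.
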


In dimension $5$, Larman and Rogers~\cite{Larman-Rogers} showed that $f(5)\geq 16$ by a construction based on the so-called Clebsch graph.
In dimensions $6$ to $9$, we use their construction to obtain lower bounds that are stronger than the lower bound $2d+4$ stated below in Theorem~\ref{thm:lowerbound}. 
We again complement this with some computer-assisted upper bounds.
\begin{theorem}
\label{thm:5d}
The largest number of points in an almost-equidistant set in $\bR^5$, $\bR^6$, $\bR^7$, $\bR^8$ and $\bR^9$ 
satisfy the following:
$16\leq f(5)\leq 20$, $18\leq f(6)\leq 26$, $20\leq f(7)\leq 34$, $24\leq f(8)\leq 41$, and $24\leq f(9)\leq 49$.
\end{theorem}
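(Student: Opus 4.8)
The plan is to prove the lower bounds by explicit constructions and the upper bounds by a short Ramsey‑type argument sharpened, in the three lowest dimensions, by an exhaustive computer search over the surviving graphs.

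\emph{Lower bounds.} For $d=5$ we use the configuration of Larman and Rogers \cite{Larman-Rogers}: sixteen points of $\bR^5$ realising the Clebsch graph (equivalently, the folded $5$‑cube) at unit distance, whose complement is triangle‑free, so that the set is almost‑equidistant; this gives $f(5)\ge 16$. For dimensions $6$ through $8$ we place this (or a suitable sub‑)configuration in a hyperplane of $\bR^{d}$ and adjoin a bounded number of new points — one or two reflected about the hyperplane, and four of them in the step from dimension $7$ to $8$ — positioned so as to create new unit distances without producing three mutually non‑unit points; this beats the generic bound of Theorem~\ref{thm:lowerbound}. Each augmented set is verified to be almost‑equidistant by inspecting its finite unit‑distance graph, giving $f(6)\ge 18$, $f(7)\ge 20$, $f(8)\ge 24$, and then $f(9)\ge f(8)\ge 24$ since embedding $\bR^8$ in $\bR^9$ preserves the property.

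\emph{Upper bounds, combinatorial core.} Let $V$ be an almost‑equidistant set of size $n$ in $\bR^d$ with unit‑distance graph $G$. The defining condition says exactly that $G$ has no independent set of size $3$. A clique of $G$ is an equilateral set of points, which in $\bR^d$ has at most $d+1$ points, so $\omega(G)\le d+1$. A graph on $n$ vertices with no independent $3$‑set and no clique of size $d+2$ has $n\le R(3,d+2)-1$, hence $f(d)\le R(3,d+2)-1$. Substituting $R(3,7)=23$, $R(3,8)=28$, $R(3,9)=36$, $R(3,10)\le 42$, $R(3,11)\le 50$ yields $f(5)\le 22$, $f(6)\le 27$, $f(7)\le 35$, $f(8)\le 41$, $f(9)\le 49$, which already gives the last two asserted inequalities.

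\emph{Upper bounds, sharpening by search.} For $d=5,6,7$ the Ramsey bound is off by $2,1,1$, and here I would invoke a computer search. The remaining abstract candidates are exactly the $(3,d+2)$‑Ramsey graphs on the relevant number of vertices that additionally satisfy the geometric constraint that every point lies at non‑unit distance from at most $d+1$ others (so $\overline G$ has maximum degree at most $d+1$); this is an explicitly generable finite family, much smaller than the family of all Ramsey graphs. One must show that none of them is realisable in $\bR^d$ as a unit‑distance graph with non‑edges at non‑unit distance. Realisability is tested through Gram matrices: a realisation exists iff there is a positive semidefinite matrix of rank at most $d$ meeting the linear conditions ``squared distance $=1$'' on edges and ``squared distance $\neq 1$'' on non‑edges. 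Since $\overline G$ is triangle‑free, $G$ contains a regular simplex — a clique of size $\Omega(\sqrt n)$, which for $n$ in this range is full‑dimensional — so one may fix its coordinates and thereby confine the further points severely (typically to a finite set), which both prunes the enumeration and makes each realisability decision tractable; non‑realisability is certified by an explicit overdetermined subsystem checked in exact arithmetic, or by a semidefinite relaxation, while feasible systems are resolved by exhibiting an approximate and then an exact realisation.

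\emph{Main obstacle.} The crux is the realisability decision: deciding, over $\bR$, the solvability of a system of quadratic equations and strict inequalities for each of many candidate graphs. The proof therefore stands or falls on (i) pruning the graph enumeration aggressively via the simplex‑anchoring and the degree bound $\Delta(\overline G)\le d+1$, and (ii) handling the borderline graphs — those that come close to being realisable — with a rigorous certificate rather than a purely numerical one. The Ramsey estimate and the verification of the constructions are routine by comparison.
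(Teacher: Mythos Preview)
Your overall architecture---Larman--Rogers plus orthogonal augmentation for the lower bounds, and the Ramsey bound $f(d)\le R(3,d+2)-1$ sharpened by a computer search for $d=5,6,7$---matches the paper. The lower-bound constructions in the paper are more explicit than your sketch (for each of $d=6,7,8$ one places the $16$-point set $S$ in a $5$-flat and adds, respectively, $2$, $4$, $8$ points in the orthogonal complement: $\pm\sqrt{3/8}\,e$ in $\bR^6$, the four points $(\pm\tfrac12,\pm\tfrac{1}{\sqrt8})$ in an orthogonal plane in $\bR^7$, and $\tfrac{1}{\sqrt8}\{\pm1\}^3$ in an orthogonal $3$-space in $\bR^8$), but your description is compatible with this.

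The substantive divergence is in the computer-search step. You propose to decide, for every candidate Ramsey $(3,d+2)$-graph at the relevant order, whether it is geometrically realisable in $\bR^d$ via a Gram-matrix/SDP feasibility test; you correctly flag this as the crux and the main risk. The paper avoids this obstacle entirely: it never tests realisability for $d\in\{5,6,7\}$. Instead it uses a purely combinatorial necessary condition (Lemma~\ref{lemma:abstractGraphs}): the unit-distance graph of an almost-equidistant set in $\bR^d$ cannot contain the complete multipartite graph $K_{(d+1)/2}(3,\dots,3)$ (odd $d$) or $K_{(d+2)/2}(1,3,\dots,3)$ (even $d$). Checking for this forbidden subgraph in each minimal Ramsey $(3,d+2)$-graph on $21$--$22$, $27$, and $35$ vertices (available from McKay's tables) already eliminates all of them, yielding $f(5)\le 20$, $f(6)\le 26$, $f(7)\le 34$ with no semialgebraic computation whatsoever. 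So what you call the ``main obstacle'' simply does not arise in the paper's proof. Incidentally, the extra filter you propose, $\Delta(\overline G)\le d+1$, is not an additional constraint: it is automatic once $\overline G$ is triangle-free and $G$ is $K_{d+2}$-free, since the non-neighbours of any vertex form a clique.
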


The \emph{unit-distance graph} of an almost-equidistant point set $P$ in $\bR^d$ is the graph obtained from $P$ by letting $P$ be its vertex set and by placing an edge between pairs of points at unit distance.

For every $d \in \bN$, a unit-distance graph in $\bR^d$ does not contain $K_{d+2}$ (see Corollary~\ref{corollary:clique}) and the complement of the unit-distance graph of an almost-equidistant set is triangle-free.
Thus we have $f(d)\leq R(d+2,3)-1$, where $R(d+2,3)$ is the \emph{Ramsey number} of $K_{d+2}$ and $K_3$,
that is, the smallest positive integer $N$ such that for every graph $G$ on $N$ vertices there is a copy of $K_{d+2}$ in~$G$ or a copy of $K_3$ in the complement of~$G$.

Ajtai, Koml\'{o}s, and Szemer\'{e}di~\cite{ajKomSze80} showed $R(d+2,3) \leq O(d^2/\log{d})$ and this bound is known to be tight~\cite{Kim95}.
We thus have an upper bound $f(d) \leq O(d^2/\log{d})$, which, as we show below, is not tight.
For small values of $d$ where the Ramsey number $R(d+2,3)$ is known or has a reasonable upper bound, we obtain an upper bound for~$f(d)$.
In particular, we get $f(5) \le 22$, $f(6) \le 27$, $f(7) \le 35$, $f(8) \le 41$, and $f(9) \leq 49$~\cite{radz94}.
For $d \in \{5,6,7\}$, we slightly improve these estimates to the bounds from Theorem~\ref{thm:5d} using our computer-assisted approach.

\begin{table}
\centering
\begin{tabular}{l|rrrrrrrrrc}
Dimension $d$ & $1$ & $2$ & $3$ & $4$ & $5$ & $6$ & $7$ & $8$ & 9 & $d\geq 9$\\
\hline
Lower bounds on $f(d)$ & $4$ & $7$ & $10$ & $12$ & $16$ & $18$ & $20$ & $24$ & $24$ & $2d+4$ \\
Upper bounds on $f(d)$ & $4$ & $7$ & $10$ & $13$ & $20$ & $26$ & $34$ & $41$ & $49$ & $4(d^{3/2}+\sqrt{d})$
\end{tabular}
\caption{Lower and upper bounds on the largest size of an almost-equidistant set in~$\bR^d$.}
\label{table1}
\end{table}

\subsection{Our results in higher dimensions}

We now turn to higher dimensions.
The obvious generalization of the Moser spindle gives an example of an almost-equidistant set of $2d+3$ points in $\bR^d$.
The next theorem improves this by~$1$.
It is a generalization of the almost-equidistant set on $10$ points in $\bR^3$ from Figure~\ref{fig2}.

\begin{theorem}\label{thm:lowerbound}
In each dimension $d\geq 3$, there is an almost-equidistant set in $\bR^d$ with $2d+4$ points.
\end{theorem}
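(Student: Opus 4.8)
The plan is to generalise the $10$-point configuration of Figure~\ref{fig2} verbatim: build a ``biaugmented-simplex'' block $B$ of $d+3$ points that is almost-equidistant, and then glue two congruent copies of it along a common edge to obtain $2d+4$ points, choosing the gluing rotation so that the three-point condition survives.

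\emph{The block.} Let $W=\set{w_1,\dots,w_{d-1}}$ be the vertex set of a regular unit $(d-2)$-simplex with centroid $o$, and let $\Pi$ be the $2$-plane through $o$ orthogonal to $\aff(W)$. The set of points of $\bR^d$ at unit distance from all of $w_1,\dots,w_{d-1}$ is a circle $\mathcal{C}\subseteq\Pi$ centred at $o$, of radius $\rho=\sqrt{d/(2(d-1))}$, and two points of $\mathcal{C}$ at angular distance $\theta$ are at Euclidean distance $2\rho\sin(\theta/2)$. Take $\theta=\arccos(1/d)$, which is exactly the value making $2\rho\sin(\theta/2)=1$, equivalently $\rho\sin(\theta/2)=\tfrac12$; place points $x_1,x_2,x_3,x_4$ on $\mathcal{C}$ at consecutive angular positions $\tfrac32\theta,\tfrac12\theta,-\tfrac12\theta,-\tfrac32\theta$, and set $B=W\cup\set{x_1,x_2,x_3,x_4}$. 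Then $B$ has $d+3$ points and affinely spans $\bR^d$ (for $d=3$ it is precisely a biaugmented tetrahedron). I would check that $B$ is almost-equidistant: consecutive points among $x_1,x_2,x_3,x_4$ are at unit distance (this is what the choice of $\theta$ achieves), each $x_j$ is at unit distance from every $w_i$ (the defining property of $\mathcal{C}$), the set $W$ is a clique, and the only remaining pairs --- $\set{x_1,x_3}$, $\set{x_2,x_4}$, $\set{x_1,x_4}$ --- have length strictly greater than $1$ by a short computation and form a triangle-free graph; hence every triple of $B$ contains a unit pair.

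\emph{Gluing.} Let $\ell$ be the line through $x_1$ and $x_4$ (the two vertices of minimum degree in the unit-distance graph of $B$), let $R$ be a rotation of $\bR^d$ about $\ell$ to be fixed below, and put $B'=R(B)$ with $x_j'=R(x_j)$, $w_i'=R(w_i)$; then $x_1'=x_1$, $x_4'=x_4$, so $B\cap B'=\set{x_1,x_4}$ and $\card{B\cup B'}=2d+4$ once we know the points are distinct. The crucial observation is that both $W\cup\set{x_2,x_3}$ and $W'\cup\set{x_2',x_3'}$ are cliques in the unit-distance graph, that $x_1$ is at unit distance from every $w_i$, every $w_i'$, and from $x_2$ and $x_2'$, and that $x_4$ is at unit distance from every $w_i$, every $w_i'$, and from $x_3$ and $x_3'$. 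Working through the triples of $B\cup B'$ contained in neither $B$ nor $B'$, one finds that the three-point condition can fail only for the two triples $\set{x_1,x_3,x_3'}$ and $\set{x_4,x_2,x_2'}$. Hence it suffices to choose $R$ so that $\abs{x_2-x_2'}=\abs{x_3-x_3'}=1$.

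\emph{The rotation --- the main obstacle.} This simultaneous realisation is the heart of the argument, and it works because of a symmetry together with the identity $\rho\sin(\theta/2)=\tfrac12$. The reflection of $\bR^d$ fixing $\aff(W)$ pointwise and $\ell$ setwise and interchanging $x_2\leftrightarrow x_3$ shows that $x_2$ and $x_3$ are at the same distance from $\ell$; moreover a direct computation shows that their displacement vectors from $\ell$ are equal, to a common vector of length $D=\sin\theta=\sqrt{d^2-1}/d$, which satisfies $D\ge\tfrac12$ for every $d\ge2$. Since the orthogonal complement of the direction of $\ell$ has dimension $d-1\ge2$, I can take $R$ to rotate, about $\ell$, inside a $2$-plane containing that common direction, through the angle $\alpha$ with $\sin(\alpha/2)=1/(2D)\le 1$; this yields $\abs{x_2-x_2'}=\abs{x_3-x_3'}=2D\sin(\alpha/2)=1$ at once. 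Finally, comparing coordinates of the points along $\ell$ and inside $\Pi$ rules out every possible coincidence among the $2d+4$ points except $w_i=w_j'$, and the latter is excluded for a generic orientation of $W$ inside its $(d-2)$-flat (for $d=3$ it cannot happen at all). Thus the main work is the finite case analysis in the gluing step and the verification just sketched that one rotation produces both missing unit distances; the rest is routine trigonometry.
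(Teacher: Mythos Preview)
Your construction is the same as the paper's: your block $B=W\cup\{x_1,x_2,x_3,x_4\}$ is congruent to the paper's $S\cup\{x_0',x_1'\}$ (with $W\leftrightarrow\{x_2,\dots,x_d\}$, and your four circle points $x_1,x_2,x_3,x_4\leftrightarrow x_1',x_0,x_1,x_0'$, which also all lie on~$\mathcal{C}$), both proofs glue two copies along the two ``outer'' circle points and choose the rotation so that the two ``inner'' ones move by unit distance, and your key quantity $D=\sin\theta=\sqrt{1-1/d^2}$ is exactly the paper's $\norm{c-o}$. The only substantive difference is the distinctness step, where instead of your genericity appeal the paper proves the clean inequality $\norm{x_i-o}<\norm{c-o}$ for $i\ge 2$ (in your language: each $w_j$ has smaller perpendicular component than $x_2,x_3$), so that each such point moves by strictly less than~$1$ and hence cannot coincide with any point of the rotated copy.
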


Rosenfeld~\cite{Rosenfeld} showed that an almost-equidistant set on a sphere in $\bR^d$ of radius $1/\sqrt{2}$ has size at most $2d$, which is best possible.
Rosenfeld's proof, which uses linear algebra, was adapted by Bezdek and Langi~\cite{Bezdek-Langi} to spheres of other radii.
They showed that an almost-equidistant set on a sphere in $\bR^d$ of radius $\leq1/\sqrt{2}$ has at most $2d+2$ elements, which is attained by the union of two $d$-simplices inscribed in the same sphere.

Pudl\'ak~\cite{Pudlak} and Deaett~\cite{Deaett} gave simpler proofs of Rosenfeld's result.
Our final result is an asymptotic upper bound for the size of an almost-equidistant set, based on Deaett's proof~\cite{Deaett}.

\begin{theorem}\label{thm1}
An almost-equidistant set of points in $\bR^d$ has cardinality $O(d^{3/2})$.
\end{theorem}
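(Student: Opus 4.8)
The plan is to follow Deaett's linear-algebraic proof of Rosenfeld's theorem, but track the contribution of the non-unit pairs as a low-rank perturbation. Let $V = \{v_1,\dots,v_n\}$ be an almost-equidistant set in $\bR^d$ with $n = f(d)$, and let $G$ be its unit-distance graph. Since the complement $\overline{G}$ is triangle-free, Ramsey-type considerations give $n \le R(d+2,3)-1 = O(d^2/\log d)$, which is already finite, so we may assume $n$ is large. By Turán's theorem (or just the triangle-free bound), $\overline{G}$ has at most $n^2/4$ edges, so $G$ has at least $\binom n2 - n^2/4 \ge n^2/4 - O(n)$ edges. The key structural input is that $\overline{G}$ is triangle-free, hence its neighbourhoods are independent sets in $\overline{G}$, i.e.\ cliques in $G$; and by Corollary~\ref{corollary:clique} a unit-distance graph in $\bR^d$ contains no $K_{d+2}$, so every vertex of $\overline{G}$ has degree at most $d$ in $\overline{G}$. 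Therefore $\overline{G}$ has at most $dn/2$ edges: $G$ is missing only $O(dn)$ edges.

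Next I would set up the Gram-type matrix. After translating so that the points are positioned conveniently, consider the matrix $M$ with $M_{ij} = 1 - \tfrac12\|v_i - v_j\|^2 = \langle v_i, v_j\rangle + c$ for an appropriate additive constant, or more simply work with the matrix $N$ defined by $N_{ij} = \|v_i-v_j\|^2$. When $\{i,j\}$ is a unit-distance edge, the relevant entry is forced to a single fixed value; the off-diagonal entries that are \emph{not} determined correspond exactly to the non-edges of $G$, of which there are at most $dn/2$. The standard Rosenfeld/Deaett argument shows that a matrix all of whose off-diagonal entries equal a fixed constant and whose diagonal is suitably controlled, subject to a rank bound coming from the ambient dimension (the Gram matrix has rank at most $d+1$), forces $n$ to be small. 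Here the rank of the ``idealised'' matrix is still $O(d)$, but we have perturbed $O(dn)$ entries. Write the true matrix as $A = J + B - D + E$, where $J$ is all-ones (or the appropriate rank-one piece), $B$ is the genuine Gram part of rank at most $d+1$, $D$ is a diagonal correction, and $E$ is the error matrix supported on the non-edges of $G$, so $E$ has at most $dn$ nonzero entries. Crucially $E$ has few nonzero entries \emph{per row}: at most $d$. One then estimates $\trace{A^2}$ from above (each nonzero off-diagonal entry contributes a bounded amount, giving $\trace{A^2} \le n + O(dn)$, say $\le C d n$) and from below using the rank bound: if $A$ has a large component of rank $r = O(d)$ plus a diagonal piece, Cauchy--Schwarz on the eigenvalues gives $(\trace A)^2 \le r \cdot \trace{A^2}$, and $\trace A$ is of order $n$ (the diagonal is essentially constant and nonzero). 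Combining, $n^2 \lesssim \trace{(\text{low-rank part})}^2 \le O(d)\cdot\trace{A^2} \le O(d)\cdot O(dn) = O(d^2 n)$, which only yields $n = O(d^2)$ — not good enough, so the crude bound must be sharpened.

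The sharpening is where the $d^{3/2}$ exponent appears, and this is the main obstacle. Instead of bounding $\trace{A^2}$ by the number of nonzero entries times a constant, I would exploit that $E$ is the adjacency-type matrix of the sparse graph $\overline{G}$, whose maximum degree is at most $d$; hence $\|E\|_{\mathrm{op}} \le d$. The decomposition $A = (J/\text{something} + B - D) + E$ then expresses $A$ as (rank $O(d)$) $+$ (diagonal) $+$ (operator norm $\le d$). Now the eigenvalue-counting argument is run more carefully: $A$ has $n - O(d)$ eigenvalues lying within distance $d$ of the eigenvalues of the diagonal part $J - D$, which are all near a fixed value $\approx 2$; so all but $O(d)$ eigenvalues of $A$ lie in an interval of length $O(d)$. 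Meanwhile $\trace A = \Theta(n)$ is the sum of all $n$ eigenvalues. Since at most $O(d)$ of them can be large (those come from the rank-$O(d)$ part) and the rest are $O(d)$ in absolute value, we get $\Theta(n) = \trace A \le O(d)\cdot\lambda_{\max} + n\cdot O(d)$ — which is automatically true and gives nothing unless we also push on $\trace{A^2}$. The correct balance, following Deaett, is: $\trace{A^2} = \sum \lambda_i^2 \ge \frac{(\trace A)^2}{n} = \Theta(n)$ trivially, but better, splitting off the $k = O(d)$ large eigenvalues and applying Cauchy--Schwarz only to them gives $\trace A \le \sqrt{k}\sqrt{\sum_{\text{large}}\lambda_i^2} + n\cdot O(d)$; to make the first term dominate one needs $\sum_{\text{large}}\lambda_i^2 \le \trace{A^2} \le n + O(dn)$, whence $\Theta(n) \le \sqrt{O(d)}\sqrt{O(dn)} + O(dn)$, i.e.\ $n \le O(d)\sqrt{n} + O(dn)$ — still stuck at $O(d^2)$. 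The genuine fix, which I expect to require the most care, is to bound $\trace{A^2}$ not by $O(dn)$ but to observe that after subtracting the rank-$O(d)$ and diagonal pieces the residual matrix $E$ satisfies $\trace{E^2} = 2\,|\overline{E(G)}| \le dn$ while simultaneously $E$ contributes only $O(d)$ to each eigenvalue; carrying both facts through the inequality $(\trace A - (\text{large part}))^2 \le n\cdot\trace{E^2}$ yields $n^2 \le O(d)\cdot$(contribution of large eigenvalues)$^2/n + n\cdot dn$, and optimising the split point between ``large'' and ``small'' eigenvalues — at roughly $\sqrt d$ large eigenvalues — produces $n = O(d^{3/2})$. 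I would present this as: let the large eigenvalues be those exceeding a threshold $t$; there are at most $\trace{A^2}/t^2 \le Cdn/t^2$ of them; the small ones sum to at most $n t$ in absolute value; so $\Theta(n) = \trace A \le \sqrt{Cdn/t^2}\cdot\sqrt{Cdn} + nt = Cdn/t + nt$; choosing $t = \sqrt{d}$ gives $\Theta(n) \le (C+1)n\sqrt d$, wait — that is also vacuous. The actual working version tracks $\trace A$ against $\trace{A^2}$ through rank rather than a threshold: with $\rank(A - \text{diag}) \le \rho = O(d)$ one has $\trace{(A-\mathrm{diag})}^2 \le \rho\,\trace{(A-\mathrm{diag})^2}$, and $\trace(A - \mathrm{diag}) = \Theta(n) - O(\text{error})$ while $\trace{(A-\mathrm{diag})^2} \le \trace{A^2} + \dots \le O(dn)$; thus $n^2 \le O(d)\cdot O(dn)$ only if no cancellation — the point I must nail down is that the diagonal correction does not secretly carry the rank up, so that $\rho = O(d)$ genuinely holds, and that the off-diagonal error enters $\trace{A^2}$ with total weight $O(dn)$ and not $O(n^2)$. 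Granting these two points, $n^2 = O(d^2 n)$ is what the naive computation gives; extracting the extra $\sqrt d$ improvement over this is exactly Deaett's refinement, and adapting it to the perturbed setting is the crux of the argument. I would therefore structure the final proof as: (1) reduce to the sparse-complement structure via Ramsey/Corollary~\ref{corollary:clique}; (2) build the perturbed Gram matrix and identify rank, diagonal, and sparse-error components; (3) run Deaett's trace inequality with the sparse error controlled in both $\trace{E^2}$ and $\|E\|_{\mathrm{op}}$ simultaneously; (4) optimise to land at $O(d^{3/2})$.
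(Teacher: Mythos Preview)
Your proposal has a genuine gap, and you identify it yourself: every variant of the trace/rank computation you try lands at $n = O(d^2)$, and you close by saying that ``extracting the extra $\sqrt{d}$ improvement \dots\ is the crux of the argument'' without actually supplying that crux. Working directly with the full set $V$ and the decomposition (rank-$O(d)$) + (diagonal) + (sparse error $E$), the row-sum bound you can hope for is $\sum_j a_{i,j}^2 = O(d)$ per row, since each row has at most $O(d)$ non-edge entries of size $O(1)$; feeding this into the rank inequality $\rank A \ge (\trace A)^2/\trace(A^2)$ with $\rank A = O(d)$ and $\trace A = \Theta(n)$ gives exactly $n = O(d^2)$, and neither the operator-norm bound $\|E\| \le d$ nor a threshold split of the spectrum improves this --- as your own calculations show.

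The paper's proof obtains the missing $\sqrt{d}$ by a structural reduction you do not attempt: first fix a clique $S\subset V$ of size $k = \lfloor 2\sqrt{d}\rfloor$ and pass to the set $B$ of common neighbours of $S$. This costs only $|V\setminus B|\le k(d+1) = O(d^{3/2})$ vertices. The set $B$ lies on a sphere of radius $\sqrt{(k+1)/(2k)}$ in a $(d-k+1)$-flat (Lemma~\ref{lemma:sphere}); after rescaling to unit vectors, two points of $B$ at unit distance have inner product $\epsi = 1/(k+1) \approx 1/(2\sqrt{d})$. Now one runs the same rank inequality on $A = [\langle v_i,v_j\rangle - \epsi]$, but the row-sum is bounded by $\sum_j(\langle v_i,v_j\rangle - \epsi)^2 < 4\epsi d = O(\sqrt{d})$ rather than $O(d)$. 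This is the source of the gain, and it is not merely a sparsity count: the non-neighbours of $v_i$ within $B$ form a clique in $G$, hence have \emph{pairwise} inner products exactly $\epsi$, and a dedicated calculation (the paper's Lemma~\ref{lemma:calc}) converts this exact structure into the bound $(1-\epsi)(\|v_i\|^2-\epsi) + \epsi(\cdots)^2 < 4\epsi d$. With $\rank A \le d-k+2$ this yields $|B| < 4d^2/(k+1) = O(d^{3/2})$, and adding back $|V\setminus B|$ finishes. In short: the extra $\sqrt{d}$ does not come from a sharper eigenvalue argument on your matrix, but from first passing to a sphere whose radius makes the ``ideal'' inner product $\epsi$ small of order $1/\sqrt{d}$.
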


We note that Polyanskii~\cite{Polyanskii} recently found an upper bound of $O(d^{13/9})$ for the size of an almost-equidistant set in~$\bR^d$ and Kupavskii, Mustafa, and Swanepoel~\cite{KMS} improved this to $O(d^{4/3})$.
Both papers use ideas from our proof of Theorem~\ref{thm1}.

In this paper, we use $\norm{v}$ to denote the Euclidean norm of a vector $v$ from $\bR^d$.
For a subset $S$ of $\bR^d$, we use $\lin(S)$ and $\aff(S)$ to denote the linear hull and the affine hull of~$S$, respectively.

The proofs of Theorems~\ref{thm:lowerbound} and~\ref{thm1} are in Section~\ref{section:highd}.
Theorems~\ref{thm:2d} to~\ref{thm:5d} are proved in Section~\ref{section:lowd}.
In Section~\ref{section:general}, we discuss a possible generalization of the problem of determining the function $f$.

\section{High dimensions}
\label{section:highd}

In this section, we first prove Theorem~\ref{thm:lowerbound} by constructing, for every integer $d \ge 3$, an almost-equidistant set in $\bR^d$ with $2d+4$ points.
In the rest of the section, we prove Theorem~\ref{thm1} by showing the upper bound $f(d) \le O(d^{3/2})$.

\subsection{Proof of Theorem~\ref{thm:lowerbound}}

Consider an equilateral $d$-simplex $\triangle$ with vertex set $S=\{x_0,\dots,x_d\}$.
Let $x_i'$ be the reflection of $x_i$ in the hyperplane through the facet of $\triangle$ not containing~$x_i$.
We will show that there exists an isometry $\rho$ of $\bR^d$ that fixes the line determined by $x_0'$ and $x_1'$ such that the distances $\norm{x_0-\rho(x_0)}$ and $\norm{x_1-\rho(x_1)}$ are both~$1$, and such that $\{x_0,\dots,x_d\}$ is disjoint from $\{\rho(x_0),\dots,\rho(x_d)\}$.

With such an isometry $\rho$, the set $R:=S\cup\rho(S)\cup\{x_0',x_1'\}$ clearly contains $2d+4$ distinct points.
We next show that $R$ is almost equidistant.
Suppose for contradiction there is a subset of $R$ with three points and with no pair of points at distance~$1$.
Since $S$ and $\rho(S)$ form cliques in the unit-distance graph of $R$ and every point from $S \cup \rho(S)$ is at unit distance from $x'_0$ or $x'_1$, this subset is necessarily $\{x_i,\rho(x_j), x_k'\}$ for some $i,j\in\{0,\dots,d\}$ and $k\in\{0,1\}$.
Since $x_k'$ is at distance $1$ from all other points in $S\cup\rho(S)$ except $x_k$ and $\rho(x_k)$, we obtain $i=k$ and $j=k$.
However, then the points $x_k$ and $\rho(x_k)$ are at distance 1, as the isometry $\rho$ is chosen so that $\norm{x_k-\rho(x_k)}=1$, contradicting the choice of the $3$-point subset.

It remains to show that there exists an isometry $\rho$ as described above.
Let $c:=\frac12(x_0+x_1)$, and assume without loss of generality that $\frac12(x_0'+x_1')$ is the origin $o$.
Let $H$ be the $(d-1)$-dimensional subspace through $o$ with normal $x_1-x_0$.
Note that $c,x_2,\dots,x_d\in H$.
Let $V$ be any $2$-dimensional subspace containing $o$ and $c$.
(For instance, we can let $V$ be the linear span of $c$ and $x_2$.)
Let $\pi$ be the orthogonal projection of $\mathbb{R}^d$ onto~$V$.
Then $\pi(x_0)=\pi(x_1)=c$ and $\pi(x_0')=\pi(x_1')=o$.
Let $\rho_V$ be a rotation of $V$ around $o$ such that $\norm{c-\rho_V(c)}=1$.
Define $\rho\colon\bR^d\to\bR^d$ by $\rho(x):=\rho_V(\pi(x))+(x-\pi(x))$.
Then $\rho$ is an isometry that fixes the orthogonal complement of $V$ (including $x_0'$ and $x_1'$) and moves $x_0$ and $x_1$ by a distance of $1$.
Since $x_0,x_1\notin H$, it follows that $\rho(x_0),\rho(x_1)\notin H$, hence $\rho(x_0),\rho(x_1)$ do not coincide with any of $x_2,\dots,x_d$.
Similarly, $x_0,x_1$ do not coincide with any of $\rho(x_2),\dots,\rho(x_d)$.
It remains to show that $\{x_2,\dots,x_d\}$ is disjoint from $\{\rho(x_2),\dots,\rho(x_d)\}$.
It is sufficient to prove that $\norm{x_i-\rho(x_i)}<1$ for all $i=2,\dots,d$, since $\norm{x_i-x_j}=1$ for all distinct $i$ and $j$, and $\rho$ does not fix any of $x_2,\dots,x_d$.
We first calculate that
\begin{equation}\label{lengths}
\norm{c-o}=\sqrt{1-\frac{1}{d^2}}\quad\text{and}\quad\norm{x_i-o}=\sqrt{\frac{3}{4}-\frac{1}{d}-\frac{1}{d^2}} < \norm{c-o}.
\end{equation}
Note that \[x_0'=\frac{2}{d}(x_1+\dots+x_d)-x_0 = \frac{2}{d}\sum_{i=0}^d x_i - \left(1+\frac{2}{d}\right)x_0,\] and similarly, \[ x_1' = \frac{2}{d}\sum_{i=0}^d x_i - \left(1+\frac{2}{d}\right)x_1.\]
It follows that \[ c = c-o =\frac12(x_0+x_1)-\frac12(x_0'+x_1') = \left(1-\frac{1}{d}\right)(x_0+x_1) - \frac{2}{d}(x_2+\dots+x_d).\]
We can embed $\bR^d$ isometrically into the hyperplane $\setbuilder{(\lambda_0,\dots,\lambda_d)}{\lambda_0+\dots+\lambda_d=1}$ of~$\bR^{d+1}$ by sending $x_i$ to $\frac{1}{\sqrt{2}}e_i\in\bR^{d+1}$, $i=0,\dots,d$.
It follows that \[\norm{c-o}=\frac{1}{\sqrt{2}}\norm{\left(1-\frac{1}{d},1-\frac{1}{d},-\frac{2}{d},\dots,-\frac{2}{d}\right)} = \sqrt{1-\frac{1}{d^2}},\]
which is the first half of \eqref{lengths}.
Similarly, it follows that
\[ \norm{x_i-o} = \frac{1}{\sqrt{2}}\norm{\left(\frac{1}{2}-\frac{1}{d},\frac{1}{2}-\frac{1}{d},1-\frac{2}{d},-\frac{2}{d},\dots,-\frac{2}{d}\right)} = \sqrt{\frac{3}{4}-\frac{1}{d}-\frac{1}{d^2}},\]
which establishes the second half of \eqref{lengths}.
Since the isometry $\rho$ is a rotation in $V$, moving each point in $V$ at distance $\sqrt{1-\frac{1}{d^2}}$ from $o$ by a distance of $1$, it will move each point in $\bR^d$ at distance less than $\sqrt{1-\frac{1}{d^2}}$ from $o$ by a distance less than $1$.
Therefore, $x_2,\dots,x_d$ are all moved by a distance less than $1$, and the proof is finished.

\subsection{Proof of Theorem~\ref{thm1}}

As a first step towards this proof, we show the following lemma, whose statement is illustrated in Figure~\ref{fig-sphere}.
This lemma is also used later in Section~\ref{section:lowd}.

\begin{lemma}
\label{lemma:sphere}
For $d,k\in \bN$, let $C$ be a set of $k$ points in $\bR^d$ such that the distance between any two of them is $1$.
Let $c \colonequals \frac{1}{k} \sum_{p \in C} p$ be the centroid of $C$ and let $A \colonequals \lin(C-c)$.
Then the set of points equidistant from all points of $C$ is the affine space $c+A^\perp$ orthogonal to $A$ and passing through $c$.
Furthermore, the intersection of all unit spheres centred at the points in $C$ is the $(d-k)$-dimensional sphere of radius $\sqrt{(k+1)/(2k)}$ centred at $c$ and contained in $c+A^{\perp}$.
\end{lemma}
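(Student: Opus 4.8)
The plan is to establish the two assertions in sequence, using only elementary linear algebra. First I would observe that a point $x$ is equidistant from all points of $C$ if and only if $\norm{x-p}=\norm{x-q}$ for all $p,q\in C$, which expands to $\ipr{x}{p-q} = \frac12(\norm{p}^2-\norm{q}^2)$; equivalently, writing $x = c + y$, the condition becomes $\ipr{y}{p-q}=0$ for all $p,q\in C$ (the constant terms cancel because all $p\in C$ are equidistant from $c$, as $c$ is their centroid). Since the differences $p-q$ span the same subspace as $C-c$, namely $A$, this says precisely $y\in A^\perp$, giving the first claim that the equidistant locus is $c+A^\perp$.

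For the second assertion I would compute the common radius. Fix any $p\in C$; a point $x = c+y$ with $y\in A^\perp$ lies in the intersection of the unit spheres if and only if $\norm{x-p}=1$, i.e. $\norm{y}^2 - 2\ipr{y}{p-c} + \norm{p-c}^2 = 1$. Since $p - c \in A$ and $y\in A^\perp$, the cross term vanishes, so $\norm{y}^2 = 1 - \norm{p-c}^2$. It remains to evaluate $\norm{p-c}^2$: with $c = \frac1k\sum_{q\in C} q$ we have $p - c = \frac1k\sum_{q\in C}(p-q)$, and expanding $\norm{p-c}^2 = \frac{1}{k^2}\sum_{q,q'}\ipr{p-q}{p-q'}$ using $\norm{p-q}^2=1$ and the polarization identity $\ipr{p-q}{p-q'} = \frac12(\norm{p-q}^2+\norm{p-q'}^2-\norm{q-q'}^2)$, which equals $\frac12$ when $q\ne q'$ and $1$ when $q=q'$. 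Counting the $k$ diagonal terms and $k^2-k$ off-diagonal terms gives $\norm{p-c}^2 = \frac{1}{k^2}\big(k\cdot 1 + (k^2-k)\cdot\frac12\big) = \frac{k+1}{2k}$. Hence $\norm{y}^2 = 1 - \frac{k+1}{2k} = \frac{k-1}{2k}$.

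Wait—this yields radius $\sqrt{(k-1)/(2k)}$, not $\sqrt{(k+1)/(2k)}$ as stated; I would recheck $\norm{p-c}^2$. The correct value is $\norm{p-c}^2 = \frac{k-1}{2k}$ (for $k=2$ this is $\frac14$, correct since $c$ is the midpoint of a unit segment), so $\norm{y}^2 = 1-\frac{k-1}{2k} = \frac{k+1}{2k}$, matching the statement. So the intersection is $\{c+y : y\in A^\perp,\ \norm{y}^2 = (k+1)/(2k)\}$, which is a sphere of radius $\sqrt{(k+1)/(2k)}$ in the affine space $c+A^\perp$. Finally I would record the dimension: since the $k$ points of $C$ form a nondegenerate simplex (any set of points at pairwise unit distance is affinely independent), $\dim A = k-1$, so $\dim A^\perp = d-(k-1)$ and the sphere is $(d-k)$-dimensional, as claimed.

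The argument is entirely routine; the only genuine point of care is the computation of $\norm{p-c}^2$ and keeping the orthogonal decomposition $x - p = (x-c) + (c-p)$ with $x-c\in A^\perp$ and $c-p\in A$ straight, so that all cross terms drop out. I would also make sure to justify that $C$ is affinely independent (equivalently, that the pairwise-distance-$1$ condition forces the points to be vertices of a regular simplex) so that the stated dimension count for the sphere is valid; this follows from the standard fact that a regular simplex is nondegenerate, or directly from the fact that the Gram matrix of $\{p-c : p\in C\}$ computed above has rank $k-1$.
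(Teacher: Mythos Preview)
Your argument is correct and follows essentially the same route as the paper: both proofs use the orthogonal decomposition $x-p=(x-c)-(p-c)$ with $x-c\in A^\perp$ and $p-c\in A$, reduce the second assertion to computing $\norm{p-c}^2=(k-1)/(2k)$, and read off the dimension from $\dim A=k-1$. The only cosmetic differences are that the paper obtains $\norm{p-c}$ by comparison with the standard simplex in $\bR^k$ rather than by your polarization count, and handles the converse direction of the equidistant-locus claim via an averaging trick (showing $\ipr{x-c}{p_i-c}$ is constant in $i$ and hence zero) rather than invoking the fact---which you should justify explicitly, since it does not follow from $c$ being the centroid alone---that all $p\in C$ are equidistant from $c$.
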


\begin{figure}[ht]
\centering
\includegraphics[scale=1.1]{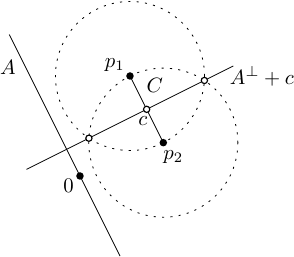}
\caption{An illustration of the statement of Lemma~\ref{lemma:sphere} for $d=k=2$.
The points $p_1$ and $p_2$ are at distance $1$.
The intersection of the unit spheres centred at $p_1$ and $p_2$ is the $0$-dimensional sphere of radius $\sqrt{(k+1)/(2k)}=\sqrt{3}/2$ centred at $c$.}
\label{fig-sphere}
\end{figure}

\begin{proof}
Let $C=\{p_1,\dots,p_k\}$.
First, we show that a point $x$ from $\bR^d$ lies in $c+A^{\perp}$ if and only if $\norm{x-p_1}=\cdots=\norm{x-p_k}$, which gives the first part of the lemma.
For every $x \in \bR^d$ and each $i \in \{1,\dots,k\}$, we have
\begin{align*}
\norm{x-p_i}^2 &= \norm{(x-c)-(p_i-c)}^2\\
&= \norm{x-c}^2 + \norm{p_i-c}^2 -2\ipr{x-c}{p_i-c}.
\end{align*}
Let $\triangle$ be the standard $(k-1)$-simplex with vertices $(1,0,\dots,0),\dots,(0,\dots,0,1)$ embedded in $\bR^k$.
Note that $\sqrt{2}\norm{p_i-c}$ is the distance between a vertex of $\triangle$ and its centroid $(1/k,\dots,1/k)$, which is $\sqrt{(k-1)/k}$. 
Therefore $\norm{p_i-c} = \sqrt{(k-1)/(2k)}$ and, in particular,
\begin{equation}
\label{eq-distance}
\norm{x-p_i}^2 = \norm{x-c}^2 + \frac{k-1}{2k} -2\ipr{x-c}{p_i-c}
\end{equation}
for every $i \in \{1,\dots,k\}$.

Now, assume that $x \in c+A^{\perp}$.
Then, since $p_i-c \in A$ for every $i \in \{1,\dots,k\}$, we have $\ipr{x-c}{p_i-c} = 0$.
By~\eqref{eq-distance}, we obtain
\[\norm{x-p_i}^2 = \norm{x-c}^2 + \frac{k-1}{2k}\]
for every $i \in \{1,\dots,k\}$ and thus $\norm{x-p_1}=\cdots=\norm{x-p_k}$.

On the other hand, if there is a $\gamma \in \bR$ such that $\gamma = \norm{x-p_i}$ for every $i \in \{1,\dots,k\}$, then the equality~\eqref{eq-distance} gives
\[\gamma^2 = \norm{x-c}^2 + \frac{k-1}{2k} -2\ipr{x-c}{p_i-c}\]
for every $i \in \{1,\dots,k\}$.
Setting $\eta \colonequals \bigl(\norm{x-c}^2 + (k-1)/(2k) - \gamma^2\bigr)/2$, we have $\ipr{x-c}{p_1-c}=\dots=\ipr{x-c}{p_k-c}=\eta$.
Using this fact and the expression of $c$, we obtain
\[0=\ipr{x-c}{o} = \ipr{x-c}{c-c} = \ipr{x-c}{\frac{1}{k}\sum_{i=1}^k p_i-c} = \frac{1}{k}\sum_{i=1}^k \ipr{x-c}{p_i-c} =\eta,\]
where $o$ denotes the origin in $\bR^d$.
Thus $\ipr{x-c}{p_i-c}=\eta=0$ for every $i \in \{1,\dots,k\}$ and, since every element of $A$ is a linear combination of elements from $C-c$, we have $x \in c+A^{\perp}$.

For the second part of the lemma, let $S$ be the intersection of all unit spheres centred at the points in $C$.
By the first part of the lemma, we know that
\[S= \setbuilder{x \in c+A^{\perp}}{1 = \norm{x-c}^2 + \frac{k-1}{2k}}.\]
Since $A$ is a $(k-1)$-dimensional subspace, $c+A^{\perp}$ is a $(d-k+1)$-dimensional affine subspace of $\bR^d$, hence $S$ is the $(d-k)$-dimensional sphere of radius $\sqrt{(k+1)/(2k)}$ centred at $c$ and contained in $c+A^{\perp}$.
\end{proof}

\begin{corollary}
\label{corollary:clique}
For $d \in \bN$, every subset of $\bR^d$ contains at most $d+1$ points that are pairwise at unit distance.
\end{corollary}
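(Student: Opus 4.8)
The plan is to read the corollary off Lemma~\ref{lemma:sphere} essentially for free. Let $C\subseteq\bR^d$ be a set whose points are pairwise at unit distance, put $k:=\card{C}$, let $c$ be its centroid, and set $A:=\lin(C-c)$. The proof of Lemma~\ref{lemma:sphere} shows that $A$ is a $(k-1)$-dimensional linear subspace of $\bR^d$; equivalently, the common intersection of the unit spheres centred at the points of $C$ is a $(d-k)$-dimensional sphere contained in $c+A^{\perp}$, which forces $c+A^{\perp}$ to be $(d-k+1)$-dimensional and hence $A$ to be $(k-1)$-dimensional. Since $A\subseteq\bR^d$, this gives $k-1\le d$, that is, $\card{C}\le d+1$, which is exactly the assertion.

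An equivalent way to phrase the same argument, using only the \emph{statement} of Lemma~\ref{lemma:sphere}, is by contradiction. Suppose $\bR^d$ contained $d+2$ points that are pairwise at unit distance. Single out one of them, call it $p$, and let $C$ be the set of the remaining $d+1$ points; then $p$ is at unit distance from every point of $C$, and $C$ itself is pairwise at unit distance with $\card{C}=d+1$. Apply Lemma~\ref{lemma:sphere} to $C$: with $c$ its centroid and $A:=\lin(C-c)$, the affine subspace $c+A^{\perp}$ has dimension $d-\card{C}+1=0$, so it is the single point $\{c\}$, while the intersection $S$ of the unit spheres centred at the points of $C$ is a sphere of radius $\sqrt{(\card{C}+1)/(2\card{C})}=\sqrt{(d+2)/(2(d+1))}>0$ inside $c+A^{\perp}$. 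A sphere of positive radius contained in a single point is empty, so $S=\emptyset$; but $p\in S$, a contradiction. Hence no such configuration exists, and every subset of $\bR^d$ has at most $d+1$ pairwise unit-distance points.

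I do not expect any genuine obstacle: all the content sits in Lemma~\ref{lemma:sphere}, and the corollary is a one-line consequence. The only point needing a little care is the dimension bookkeeping --- recognising that for $d+1$ pairwise-equidistant points the orthogonal complement $A^{\perp}$ collapses to $\{o\}$, so that the ``intersection sphere'' furnished by Lemma~\ref{lemma:sphere} degenerates to the empty set (equivalently, that the $(k-1)$-dimensional span $\lin(C-c)$ cannot exceed the ambient dimension $d$).
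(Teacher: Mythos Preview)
Your proposal is correct and, like the paper, reads the corollary off Lemma~\ref{lemma:sphere} directly. The paper's one-line argument is a slight variant: it applies the lemma with $k=d$ rather than your $k=d+1$, obtaining a $0$-dimensional sphere (two points) of diameter $2\sqrt{(d+1)/(2d)}\neq 1$, so at most one further point can be at unit distance from all of~$C$; your version with $k=d+1$ (the intersection sphere is empty) is equally valid and arguably a touch cleaner.
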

\begin{proof}
Lemma~\ref{lemma:sphere} applied to a set $C$ of $d$ points in $\bR^d$ with all pairs of points at unit distance implies that the set of points that are at unit distance from all points in $C$ lies on a $0$-dimensional sphere of diameter $2\sqrt{(d+1)/(2d)} \neq 1$.
\end{proof}

The following lemma is a well-known result that bounds the rank of a square matrix from below in terms of the entries of the matrix~\cite{Alon, Deaett, Pudlak}.

\begin{lemma}
\label{lemma:rank}
Let $A=[a_{i,j}]$ be a non-zero symmetric $m\times m$ matrix with real entries.
Then \[\rank A \geq \Bigl(\sum\limits_{i=1}^m a_{i,i}\Bigr)^2 / \sum\limits_{i=1}^m\sum\limits_{j=1}^m a_{i,j}^2.\]
\end{lemma}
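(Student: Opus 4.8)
\textbf{Proof proposal for Lemma~\ref{lemma:rank}.}
The plan is to use the Cauchy--Schwarz inequality applied to the eigenvalues of $A$. Since $A$ is a real symmetric matrix, by the spectral theorem it has $m$ real eigenvalues $\lambda_1,\dots,\lambda_m$ (counted with multiplicity), and $r \colonequals \rank A$ of them are nonzero, say $\lambda_1,\dots,\lambda_r \neq 0$ and $\lambda_{r+1}=\dots=\lambda_m=0$. First I would recall the two standard trace identities: $\sum_{i=1}^m a_{i,i} = \trace{A} = \sum_{i=1}^m \lambda_i = \sum_{i=1}^r \lambda_i$, and $\sum_{i=1}^m\sum_{j=1}^m a_{i,j}^2 = \trace{A^2} = \sum_{i=1}^m \lambda_i^2 = \sum_{i=1}^r \lambda_i^2$, where the latter uses that the Frobenius norm of a symmetric matrix equals the sum of squares of its eigenvalues (equivalently, $A^2$ has eigenvalues $\lambda_i^2$ and its trace is $\sum_{i,j} a_{i,j}^2$ since $A$ is symmetric).

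Next I would apply the Cauchy--Schwarz inequality to the two vectors $(\lambda_1,\dots,\lambda_r)$ and $(1,\dots,1)$ in $\bR^r$, which gives
\[
\Bigl(\sum_{i=1}^r \lambda_i\Bigr)^2 = \Bigl(\sum_{i=1}^r \lambda_i \cdot 1\Bigr)^2 \leq \Bigl(\sum_{i=1}^r \lambda_i^2\Bigr)\Bigl(\sum_{i=1}^r 1^2\Bigr) = r \sum_{i=1}^r \lambda_i^2.
\]
Combining this with the two trace identities above yields
\[
\Bigl(\sum_{i=1}^m a_{i,i}\Bigr)^2 \leq r \sum_{i=1}^m\sum_{j=1}^m a_{i,j}^2,
\]
and since $A$ is nonzero we have $\sum_{i,j} a_{i,j}^2 > 0$, so we may divide to obtain $r \geq \bigl(\sum_i a_{i,i}\bigr)^2 / \sum_{i,j} a_{i,j}^2$, which is exactly the claimed bound. (If $\sum_i a_{i,i} = 0$ the inequality is trivial since the right-hand side is $0 \le r$.)

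There is no serious obstacle here; the only points requiring a word of care are justifying $\trace{A^2} = \sum_{i,j} a_{i,j}^2$ (which uses symmetry: the $(i,i)$ entry of $A^2$ is $\sum_j a_{i,j} a_{j,i} = \sum_j a_{i,j}^2$), and ensuring the Cauchy--Schwarz step is applied over the $r$ indices with nonzero eigenvalues rather than all $m$ — applying it over all $m$ indices would still give $(\sum \lambda_i)^2 \le m \sum \lambda_i^2$, a weaker bound of the form $\rank A \ge (\trace A)^2/\|A\|_F^2$ only after replacing $m$ by $r$, so restricting to the support of the spectrum is what makes $\rank A$ (and not $m$) appear. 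I would present the argument in that order: spectral decomposition, the two trace identities, Cauchy--Schwarz on the nonzero eigenvalues, and finally the division.
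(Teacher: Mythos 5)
Your proof is correct and complete: the spectral theorem, the two trace identities $\sum_i a_{i,i}=\trace{A}=\sum_i\lambda_i$ and $\sum_{i,j}a_{i,j}^2=\trace{A^2}=\sum_i\lambda_i^2$, and Cauchy--Schwarz restricted to the $r=\rank A$ nonzero eigenvalues give exactly the claimed bound, and you correctly identify that restricting to the nonzero eigenvalues is the step that makes $\rank A$ rather than $m$ appear. The paper itself gives no proof of this lemma, citing it as a well-known result of Alon, Deaett, and Pudl\'ak; your argument is precisely the standard one from those references, so there is nothing to reconcile.
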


The last lemma before the proof of Theorem~\ref{thm1} can be proved by a calculation, using its assumption that 
the vectors $v_i$ have pairwise inner products $\epsi$, so they differ from an orthogonal set by some skewing.

\begin{lemma}\label{lemma:calc}
For $n,t \in \bN$ with $t \leq n$, let $w_1,\dots,w_t$ be unit vectors in $\bR^n$ such that $\ipr{w_i}{w_j}=\epsi$ for all $i,j$ with $1\leq i < j \leq t$, where $\epsi\in [0,1)$.
Then the set $\{w_1,\dots,w_t\}$ can be extended to $\{w_1,\dots,w_n\}$ such that $\ipr{w_i}{w_j}=\epsi$ for all $i,j$  with $1\leq i < j \leq n$, and such that for some orthonormal basis $e_1,\dots,e_n$ we have \[w_i = \frac{e_i+\lambda e}{\norm{e_i+\lambda e}}\quad (i=1,\dots,n),\]
where \[\lambda \colonequals \frac{-1+\sqrt{1+\epsi n/(1-\epsi)}}{n} \quad\text{and}\quad e \colonequals \sum_{j=1}^n e_j = \frac{1}{\sqrt{1+(n-1)\epsi}}\sum_{j=1}^n w_j.\]
Moreover, $\norm{e_i+\lambda e}^2 = (1-\epsi)^{-1}$ for each $i\in \{1,\dots,n\}$ and for every $x\in\bR^n$ we have
\[ \sum_{j=1}^n (\ipr{x}{w_j}-\epsi)^2 = (1-\epsi)(\norm{x}^2-\epsi) + \epsi\left(\ipr{x}{e}-\sqrt{1+(n-1)\epsi}\right)^2.\]
\end{lemma}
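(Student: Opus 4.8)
The plan is to prove the four assertions of Lemma~\ref{lemma:calc} in the order they are stated, treating everything as an explicit computation once the right orthonormal basis has been identified. The only genuine content is the very first step: producing the basis $e_1,\dots,e_n$ and the extension $w_{t+1},\dots,w_n$. First I would argue that the Gram matrix of any family of unit vectors with pairwise inner product $\epsi$ is $(1-\epsi)I_n + \epsi J_n$, where $J_n$ is the all-ones matrix; since $\epsi\in[0,1)$ this matrix is positive definite (its eigenvalues are $1-\epsi$ with multiplicity $n-1$ and $1+(n-1)\epsi$ once), so such a family of $n$ vectors exists in $\bR^n$ and is unique up to an orthogonal transformation; in particular the given $w_1,\dots,w_t$ can be completed to $w_1,\dots,w_n$ with the same Gram matrix. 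To get the explicit form, I would make the ansatz $w_i = (e_i + \lambda e)/\norm{e_i+\lambda e}$ for an orthonormal basis $e_1,\dots,e_n$ and $e=\sum_j e_j$, compute $\norm{e_i+\lambda e}^2 = 1 + 2\lambda + \lambda^2 n = (1+\lambda n)^2/n \cdot (\text{something})$ — more precisely $1+2\lambda+n\lambda^2$ — and $\ipr{e_i+\lambda e}{e_j+\lambda e} = 2\lambda + n\lambda^2$ for $i\neq j$, so that the normalized inner product equals $(2\lambda+n\lambda^2)/(1+2\lambda+n\lambda^2)$; setting this equal to $\epsi$ and solving the resulting quadratic $n\lambda^2 + 2\lambda - \epsi/(1-\epsi) = 0$ for $\lambda$ gives exactly the stated value $\lambda = (-1+\sqrt{1+\epsi n/(1-\epsi)})/n$ (taking the root that makes $1+\lambda n>0$), and back-substituting shows $1+2\lambda+n\lambda^2 = (1-\epsi)^{-1}$. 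That simultaneously proves the ``Moreover'' clause about $\norm{e_i+\lambda e}^2$.

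Next I would verify the formula $e = (1+(n-1)\epsi)^{-1/2}\sum_j w_j$. Summing the ansatz gives $\sum_j w_j = \sqrt{1-\epsi}\,\sum_j (e_j + \lambda e) = \sqrt{1-\epsi}\,(1+n\lambda)\,e$, and since $1+n\lambda = \sqrt{1+\epsi n/(1-\epsi)} = \sqrt{(1+(n-1)\epsi)/(1-\epsi)}$, this collapses to $\sqrt{1+(n-1)\epsi}\;e$, as claimed. (This also re-derives that $e$ is a unit vector, consistent with $e_1,\dots,e_n$ being orthonormal.)

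Finally, for the displayed identity I would expand the left-hand side directly. Writing $x$ in the basis $e_1,\dots,e_n$, one has $\ipr{x}{w_j} = \sqrt{1-\epsi}\,(\ipr{x}{e_j} + \lambda\ipr{x}{e})$, so $\ipr{x}{w_j} - \epsi$ is an affine function of the coordinates $\ipr{x}{e_j}$ and of $\ipr{x}{e}$; squaring and summing over $j$, using $\sum_j \ipr{x}{e_j} = \ipr{x}{e}$ and $\sum_j \ipr{x}{e_j}^2 = \norm{x}^2$, produces a quadratic in $\norm{x}^2$ and $\ipr{x}{e}$ with coefficients that are rational functions of $\epsi$ and $n$; substituting $1+n\lambda = \sqrt{(1+(n-1)\epsi)/(1-\epsi)}$ and collecting terms should match the right-hand side $(1-\epsi)(\norm{x}^2-\epsi) + \epsi(\ipr{x}{e} - \sqrt{1+(n-1)\epsi})^2$. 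A useful sanity check along the way is the special case $x = w_i$, where the left side is $\sum_{j\neq i}(\epsi - \epsi)^2 + (1-\epsi)^2 = (1-\epsi)^2$ and the right side, using $\ipr{w_i}{e} = \sqrt{1-\epsi}(1+\lambda n) = \sqrt{1+(n-1)\epsi}$, is $(1-\epsi)(1-\epsi) + 0$, confirming the constant.

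I do not expect a real obstacle: the lemma is, as the authors note, ``a calculation.'' The one place to be careful is the sign of the square root in the definition of $\lambda$ — one must check that the chosen root yields $1+\lambda n>0$ (equivalently a positive normalizing denominator) and reproduces the Gram matrix $(1-\epsi)I_n+\epsi J_n$ rather than some spurious solution — and, relatedly, to handle the degenerate case $\epsi = 0$ (where $\lambda = 0$ and $w_i = e_i$) so that no division by zero occurs. Beyond that, the proof is bookkeeping: establish the Gram matrix and its positive-definiteness, guess-and-verify the basis, read off the two ``Moreover'' identities, and expand the final quadratic identity, checking it on $x=w_i$ and, say, $x$ orthogonal to all $w_j$.
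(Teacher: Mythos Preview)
Your proposal is correct and the final identity is handled exactly as in the paper. The one genuine difference is in how you produce the extension and the basis: the paper extends $w_1,\dots,w_t$ to $w_1,\dots,w_n$ by an iterative geometric argument (at each step intersecting the affine subspace $\{\ipr{x}{w_j}=\epsi : j\le i\}$ with the unit sphere and checking the intersection is nonempty), and then \emph{defines} $e_i \colonequals w_i/\sqrt{1-\epsi} - \lambda e$ from the $w_i$'s and verifies orthonormality directly. You instead invoke positive-definiteness of the Gram matrix $(1-\epsi)I_n+\epsi J_n$ to get existence of an $n$-tuple with that Gram matrix, use uniqueness up to an orthogonal map to complete the given $t$-tuple, and recover the explicit form by the ansatz $w_i=(e_i+\lambda e)/\norm{e_i+\lambda e}$ and solving for $\lambda$. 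Your route is a bit more conceptual and avoids the ad~hoc hyperplane-intersection step; the paper's route is more self-contained and yields the $e_i$'s explicitly in terms of the $w_i$'s without appealing to the ``unique up to orthogonal transformation'' fact.

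One small slip: your parenthetical ``this also re-derives that $e$ is a unit vector'' is wrong --- with $e=\sum_j e_j$ one has $\norm{e}^2=n$, not $1$ (and indeed the paper uses $\ipr{e}{e}=n$ in its computation). This does not affect anything else in your argument.
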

\begin{proof}
Let $\epsi\in [0,1)$ and $w_1,\dots,w_t$ be from the statement of the lemma.
We first extend the set $\{w_1,\dots,w_t\}$ to a set $\{w_1,\dots,w_n\}$ of unit vectors in $\bR^n$ so that $\ipr{w_i}{w_j}=\epsi$ for all $i,j$  with $1\leq i < j \leq n$.
We proceed iteratively, choosing $w_{i+1}$ after the vectors $w_1,\dots,w_i$ have been obtained for some $i$ with $t \leq i < n$. 
The condition $\ipr{w_i}{w_j} = \epsi$ for each $j$ with $1 \leq j < i$ says that the desired point $w_{i+1}$ lies in the hyperplanes $\setbuilder{x \in \bR^n}{\ipr{x}{w_j} = \epsi}$.
Since $\epsi \geq 0$, the intersection of these hyperplanes taken over $j \in \{1,\dots,i\}$ is an affine subspace~$A$ of dimension at least $n-i \geq 1$.
The subspace~$A$ contains the point 
\begin{align*}
a& \colonequals \frac{\epsi}{1+(i-1)\epsi} \cdot (w_1+\dots+w_i) \\
&= \frac{\epsi}{1+\epsi(i-1)}(w_1+\dots+w_{j-1}+w_{j+1}+\dots+w_i) + \frac{1}{1+\epsi(i-1)}\epsi w_j,
\end{align*}
which is in the convex hull of $\{w_1,\dots,w_{j-1},\epsi w_j,w_{j+1},\dots,w_i\}$ for every $j\in \{1,\dots,i\}$.
Since $\epsi < 1$, each point $\epsi w_j$ is inside the unit ball centred in the origin and so is~$a$.
Additionally, $A$ does not contain any of the points $w_1,\dots,w_i$.
Altogether, $A$ intersects the unit sphere centred in the origin at a point that is not in $\{w_1,\dots,w_i\}$.
We let $w_{i+1}$ be an arbitrary point from this intersection.
For $i=n-1$, we obtain the set $\{w_1,\dots,w_n\}$.

We now let $e_i \colonequals w_i/\sqrt{1-\epsi}-\lambda e$ for every $i \in \{1,\dots,n\}$, where $e=\frac{1}{\sqrt{1+(n-1)\epsi}}\sum_{j=1}^n w_j$ and $\lambda = (-1+\sqrt{1+\epsi n/(1-\epsi)})/n$.
That is, we skew the vectors $w_1,\dots,w_n$ so that they are pairwise orthogonal and we scale the resulting vectors so that they will form an orthonormal basis.
Note that $\lambda n+1 = \sqrt{\frac{1+(n-1)\epsi}{1-\epsi}}$.
Using this fact and the choice of $e_1,\dots,e_n$, we obtain
\[\sum_{j=1}^n e_j = \frac{\sum_{j=1}^n w_j }{\sqrt{1-\epsi}}- \lambda n e = \left(\sqrt{\frac{1+(n-1)\epsi}{1-\epsi}} - \lambda n\right) e = e.\]

We now verify that $e_1,\dots,e_n$ form an orthonormal basis of $\bR^n$.
Let $i \in \{1,\dots,n\}$ be fixed.
Note that $\ipr{w_i}{e} = \frac{1+(n-1)\epsi}{\sqrt{1+(n-1)\epsi}}=\sqrt{1+(n-1)\epsi}$, as every $w_j$ is a unit vector and pairwise inner products of vectors $w_1,\dots,w_n$ equal $\epsi$.
Summing over $i$, we obtain $\ipr{e}{e} = \frac{1}{\sqrt{1+(n-1)\epsi}}\sum_{i=1}^n\ipr{w_i}{e} = n$.
Using these facts and the choice of $\lambda$, we derive
\begin{align*}\norm{e_i}^2 &= \ipr{\frac{w_i}{\sqrt{1-\epsi}}-\lambda e}{\frac{w_i}{\sqrt{1-\epsi}}-\lambda e} 
= \frac{\norm{w_i}^2}{1-\epsi} - 2\lambda\frac{\ipr{w_i}{e}}{\sqrt{1-\epsi}}  + \lambda^2\norm{e}^2 \\
&= \frac{1}{1-\epsi} - 2\lambda\sqrt{\frac{1+(n-1)\epsi}{1-\epsi}}+\lambda^2 n 
= \frac{1}{1-\epsi} - 2\lambda(\lambda n + 1) + \lambda^2n  \\
&= \frac{1}{1-\epsi} - \lambda^2n - 2\lambda = 1.
\end{align*}
Therefore each $e_i$ is a unit vector.
Similarly, for all distinct $i$ and $j$ from $\{1,\dots,n\}$, we have
\begin{align*}\ipr{e_i}{e_j} =& \ipr{\frac{w_i}{\sqrt{1-\epsi}}-\lambda e}{\frac{w_j}{\sqrt{1-\epsi}}-\lambda e} 
= \frac{\ipr{w_i}{w_j}}{1-\epsi} - \lambda\frac{\ipr{w_i}{e}}{\sqrt{1-\epsi}} - \lambda\frac{\ipr{w_j}{e}}{\sqrt{1-\epsi}}  + \lambda^2\norm{e}^2 \\
&= \frac{\epsi}{1-\epsi} - 2\lambda\sqrt{\frac{1+(n-1)\epsi}{1-\epsi}}+\lambda^2 n 
= \frac{\epsi}{1-\epsi} - \lambda^2n - 2\lambda
= 0.
\end{align*}
We thus see that $e_1,\dots,e_n$ is indeed an orthonormal basis in $\bR^n$.

To show $\norm{e_i+\lambda e}^2 = (1-\epsi)^{-1}$ for each $i\in \{1,\dots,n\}$, we simply use the fact that $w_i$ is a unit vector and derive
\[\norm{e_i + \lambda e}^2 = \ipr{\frac{w_i}{\sqrt{1-\epsi}}}{\frac{w_i}{\sqrt{1-\epsi}}} 
= \frac{\norm{w_i}^2}{1-\epsi}  = \frac{1}{1-\epsi}.\]

It remains to prove the last expression in the statement of the lemma.
Let $x$ be an arbitrary point from $\bR^n$.
Since $e_1,\dots,e_n$ is a basis of $\bR^n$, we have $x=\sum_{i=1}^n \alpha_i e_i$ for some $(\alpha_1,\dots,\alpha_n) \in \bR^n$.
For each $j \in \{1,\dots,n\}$, we express the term $\ipr{x}{w_j}$ as 
\[\ipr{x}{w_j} 
= \sum_{i=1}^n \alpha_i\ipr{e_i}{w_j} 
= \sqrt{1-\epsi}\sum_{i=1}^n \alpha_i(\ipr{e_i}{e_j} + \lambda\ipr{e_i}{e})
= \sqrt{1-\epsi}(\alpha_j + \lambda\ipr{x}{e}),\]
using the facts that the basis $e_1,\dots,e_n$ is orthonormal and that $w_j=\sqrt{1-\epsi}(e_j+\lambda e)$.
Now, we have
\begin{align*}
&\sum_{j=1}^n \bigl(\ipr{x}{w_j} - \epsi\bigr)^2 
= \sum_{j=1}^n \bigl(\sqrt{1-\epsi}(\alpha_j + \lambda\ipr{x}{e}) - \epsi\bigr)^2 \\
&= (1-\epsi)\sum_{j=1}^n \bigl(\alpha_j + \lambda\ipr{x}{e}\bigr)^2 -2\epsi\sqrt{1-\epsi}\biggl(\sum_{j=1}^n\alpha_j + \lambda\ipr{x}{e}\biggr)+n\epsi^2 \\
&= (1-\epsi)\biggl(\sum_{j=1}^n \alpha_j^2 + 2\lambda\ipr{x}{e}\sum_{j=1}^n \alpha_j + \lambda^2 n \ipr{x}{e}^2\biggr) - 2\epsi\sqrt{1-\epsi}\biggl(\sum_{j=1}^n\alpha_j + \lambda n\ipr{x}{e}\biggr) + n\epsi^2.
\end{align*}

Since the basis $e_1,\dots,e_n$ is orthonormal, we have $\norm{x}^2 = \sum_{i=1}^n\sum_{j=1}^n \alpha_i\alpha_j \ipr{e_i}{e_j} = \sum_{i=1}^n\alpha_i^2$ and, since $e=\sum_{j=1}^n e_j$, we also have $\ipr{x}{e} = \sum_{i=1}^n \sum_{j=1}^n \alpha_i\ipr{e_i}{e_j} = \sum_{i=1}^n\alpha_i$.
The above expression thus equals
\[(1-\epsi)\bigl(\norm{x}^2 + (2\lambda+\lambda^2n)\ipr{x}{e}^2\bigr) - 2\epsi\sqrt{1-\epsi}(1+\lambda n)\ipr{x}{e} + n\epsi^2.\]
Using the facts that $2\lambda+\lambda^2n = \epsi/(1-\epsi)$ and $1+\lambda n = \sqrt{\frac{1+(n-1)\epsi}{1-\epsi}}$, this expression can be further simplified as
\[(1-\epsi)\norm{x}^2 + \epsi\left(\ipr{x}{e}^2 - 2\sqrt{1+(n-1)\epsi}\ipr{x}{e} + n\epsi\right)\]
and then rewritten to the final form
\[(1-\epsi)(\norm{x}^2-\epsi) + \epsi\left(\ipr{x}{e}-\sqrt{1+(n-1)\epsi}\right)^2.\qedhere\]
\end{proof}

We are now ready to prove Theorem~\ref{thm1}.
For $d \ge 2$, let $V\subset\bR^d$ be an almost-equidistant set.
We let $G=(V,E)$ be the unit-distance graph of $V$ and let $k \colonequals \lfloor 2\sqrt{d}\rfloor$.
Note that $1 \leq k \leq d$. 

Let $S\subseteq V$ be a set of $k$ points such that the distance between any two of them is $1$.
If such a set does not exist, then, since the complement of $G$ does not contain a triangle, we have $|V| < R(k,3)$, where $R(k,3)$ is the Ramsey number of $K_k$ and $K_3$.
Using the bound $R(k,3) \leq \binom{k+3-2}{3-1}$ obtained by Erd\H{o}s and Szekeres~\cite{ES1935}, we derive $|V| < \binom{2\sqrt{d}+1}{2} = 2d+\sqrt{d}$.
Thus we assume in the rest of the proof that $S$ exists.

Let $B$ be the set of common neighbours of $S$, that is,
\[ B \colonequals \setbuilder{x\in V}{\norm{x-s}=1\text{ for all }s\in S}.\]
Since $V$ is equidistant, 
the set of non-neighbours of any vertex of $G$ is a clique and so it has size at most $d+1$ by Corollary~\ref{corollary:clique}.
Every vertex from $V\setminus B$ is a non-neighbour of some vertex from~$S$ and thus it follows that $\card{V\setminus B} \leq k(d+2)$.

We now estimate the size of $B$.
By Lemma~\ref{lemma:sphere} applied to $S$, the set $B$ lies on a sphere of radius $\sqrt{(k+1)/2k}$ in an affine subspace of dimension $d-k+1$.
We may take the centre of this sphere as the origin, and rescale by $\sqrt{2k/(k+1)}$ to obtain a set $B'$ of $m$ unit vectors $v_1,\dots,v_m\in\bR^{d-k+1}$ where $m \colonequals \card{B}$.
For any three of the vectors from~$B'$, the distance between some two of them is $\sqrt{2k/(k+1)}$.
For two such vectors $v_i$ and $v_j$ with $\norm{v_i-v_j}^2 = 2k/(k+1)$, the facts $\norm{v_i-v_j}^2 = \norm{v_i}^2+\norm{v_j}^2-2\ipr{v_i}{v_j}$ and $\norm{v_i}^2=\norm{v_j}^2=1$ imply $\ipr{v_i}{v_j} = \epsi$, where $\epsi \colonequals 1/(k+1)$. 
Note that the opposite implication also holds.
That is, if $\ipr{v_i}{v_j} = \epsi$, then $v_i$ and $v_j$ are at distance $\sqrt{2k/(k+1)}$.

Let $A=[a_{i,j}]$ be the $m\times m$ matrix defined by $a_{i,j} \colonequals \ipr{v_i}{v_j} - \epsi$.
Clearly, $A$ is a symmetric matrix with real entries.
If $m \ge d-k+2$, then $A$ is also non-zero, as $G$ contains no $K_{d+2}$ and every vertex from $B$ is adjacent to every vertex from $S$ in $G$.
We recall that $\rank XY \leq \min\{\rank X, \rank Y\}$ and $\rank (X+Y) \leq \rank X + \rank Y$ for two matrices $X$ and $Y$.
Since $B' = \{v_1,\dots,v_m\} \subset \bR^{d-k+1}$ and 
\[A = \begin{bmatrix} v_1 & v_2 & \cdots & v_m\end{bmatrix}^\top \begin{bmatrix} v_1 & v_2 & \cdots & v_m\end{bmatrix} - \epsi J,\] 
where $J$ is the $m\times m$ matrix with each entry equal to $1$, we have
\begin{equation}\label{rank1}
\rank A\leq d-k+2.
\end{equation}
By Lemma~\ref{lemma:rank},
\begin{equation}\label{rank2}
\rank A \geq \frac{\left(\sum\limits_{i=1}^m a_{i,i}\right)^2}{\sum\limits_{i=1}^m\sum\limits_{j=1}^m a_{i,j}^2} = \frac{m^2(1-\epsi)^2}{\sum\limits_{i=1}^m \sum\limits_{j=1}^m (\ipr{v_i}{v_j} - \epsi)^2}.
\end{equation}
For $i \in \{1,\dots,m\}$, let $N_i$ be the set of vectors from $B'$ that are at distance $\sqrt{2k/(k+1)}$ from~$v_i$.
That is,
\[ N_i \colonequals \setbuilder{v_j \in B'}{\ipr{v_i}{v_j} = \epsi}.\]
Then for each fixed $v_i$ we have
\begin{equation}\label{ineq1}
\sum_{j=1}^m (\ipr{v_i}{v_j}-\epsi)^2 = (1-\epsi)^2 + \sum_{v_j\in N_i} 0 + \sum_{v_j \in B' \setminus (N_i \cup \{v_i\})} (\ipr{v_i}{v_j}-\epsi)^2.
\end{equation}

Note that the vectors from $B' \setminus (N_i \cup \{v_i\})$ have pairwise inner products $\epsi$, as neither of them is at distance $\sqrt{2k/(k+1)}$ from $v_i$, and thus $|B' \setminus (N_i \cup \{v_i\})| \leq d-k+2$.
In fact, we even have $|B' \setminus (N_i \cup \{v_i\})| \leq d-k+1$, since $B'$ contains only unit vectors and any subset of $d-k+2$ points from $B'$ with pairwise distances $\sqrt{2k/(k+1)}$ would form the vertex set of a regular $(d-k+1)$-simplex with edge lengths $\sqrt{2k/(k+1)}$ centred at the origin.
However, then the distance from the centroid of such a simplex to its vertices would be equal to $\sqrt{k(d-k+1)/((k+1)(d-k+2))} \neq 1$, which is impossible.

Thus setting $n\colonequals d-k+1$ and $t \colonequals |B' \setminus (N_i \cup \{v_i\})|$, we have $t \leq n$.
Applying Lemma~\ref{lemma:calc} to the $t$ vectors from $B' \setminus (N_i \cup \{v_i\}) \subseteq \bR^n$ with $\epsi=(k+1)^{-1}$ and $x=v_i$, we see that the last sum in~\eqref{ineq1} is at most
\[
(1-\varepsilon)^2 + \varepsilon\left(\ipr{v_i}{e}-\sqrt{1+(d-k)\epsi}\right)^2,
\]
where $e=\sum_{j=1}^{d-k+1}e_j$ for some orthonormal basis $e_1,\dots,e_{d-k+1}$ of $\bR^{d-k+1}$.

By the Cauchy--Schwarz inequality,
\begin{align*}
\left(\ipr{v_i}{e}-\sqrt{1+(d-k)\epsi}\right)^2 &< \left(\sqrt{d-k+1}+\sqrt{1+(d-k)\epsi}\right)^2\\
&= d-k+1 + 2\sqrt{d-k+1}\sqrt{1+(d-k)\epsi} + 1+(d-k)\epsi\\
&< 4 (d-k+1).
\end{align*}

Thus, using $\epsi = (k+1)^{-1}$, we obtain
\begin{align*}
\sum_{j=1}^m \bigl(\ipr{v_i}{v_j}-\epsi\bigr)^2 & < 2(1-\epsi)^2 + 4\epsi(d-k+1)\\
&= 4\epsi d + 2(1+\epsi)^2 - 4 < 4\epsi d.
\end{align*}
If we substitute this upper bound back into \eqref{rank2}, then with \eqref{rank1} we obtain that $d-k+2 > m^2(1-\epsi)^2/(4m\epsi d)$ and thus $m < (4\epsi d)(d-k+2)/(1-\epsi)^2$.
Using the choice $k = \lfloor 2\sqrt{d}\rfloor$ and the expression $\epsi = (k+1)^{-1}$, we obtain $(d-k+2)/(1-\epsi)^2 < d$, if $d \geq 8$, and thus $m < 4d^2/(k+1)$.

Altogether, we have $m \leq \max\{d-k+1,4d^2/(k+1)\} = 4d^2/(k+1)$.
It follows that $\card{V} \leq k(d+2) + 4d^2/(k+1)$.
Again, using the choice $k= \lfloor 2\sqrt{d}\rfloor \in (2\sqrt{d}-1,2\sqrt{d}]$, we conclude that \[\card{V} < 2\sqrt{d}(d+2) + 4d^2/(2\sqrt{d}) = 4d^{3/2} + 4\sqrt{d}. 
\]
This finishes the proof of Theorem~\ref{thm1}.

\section{Low dimensions}
\label{section:lowd}

In this section, we give proofs of Theorems~\ref{thm:2d}, \ref{thm:3d}, \ref{thm:4d}, and \ref{thm:5d}.
Before doing so, we introduce the notion of abstract almost-equidistant graphs.
Denote the complete $t$-partite graph with classes of sizes $m_1,\dots,m_t$ by $K_{m_1,\dots,m_t}$ or $K_t(m_1,\dots,m_t)$.
A graph $G$ is said to be an \emph{abstract almost-equidistant graph in $\bR^d$} if the complement of $G$ does not contain $K_3$ and either
\begin{itemize}
\item 
$d=2$ and $G$ does not contain $K_4$ nor $K_{2,3}$;
\item 
or $d\geq 3$, $d$ odd and $G$ does not contain $K_{d+2}$ nor $K_{(d+1)/2}(3,\dots,3)$;
\item 
or $d\geq 4$, $d$ even and $G$ does not contain $K_{d+2}$ nor $K_{(d+2)/2}(1,3,\dots,3)$.
\end{itemize}

The following lemma justifies the notion of abstract almost-equidistant graphs.
We will see later that its converse is not true, as there are abstract almost-equidistant graphs in $\bR^d$ that are not unit-distance graphs of any point set from $\bR^d$.

\begin{lemma}
\label{lemma:abstractGraphs}
For every $d \geq 2$ and every almost-equidistant set $P$ from $\bR^d$, the unit-distance graph of $P$ is an abstract almost-equidistant graph in $\bR^d$.
\end{lemma}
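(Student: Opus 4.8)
The plan is to verify the three defining properties of an abstract almost-equidistant graph for $G$, the unit-distance graph of an almost-equidistant set $P\subseteq\bR^d$. Two of them come for free: the complement of $G$ is triangle-free directly by the definition of almost-equidistant, and $G$ contains no $K_{d+2}$ by Corollary~\ref{corollary:clique} (which for $d=2$ is the exclusion of $K_4$). So the content is entirely in ruling out the relevant complete multipartite subgraph.

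For that I would first record two elementary facts about equidistant loci, in the spirit of Lemma~\ref{lemma:sphere}. If $q_1,\dots,q_j\in\bR^d$ are affinely independent, then the set of points at unit distance from all of them is contained in the $(d-j+1)$-dimensional affine subspace of points equidistant from $q_1,\dots,q_j$; and three distinct collinear points of $\bR^d$ have no common point at unit distance from all three (projecting such a point onto the line would give a point of the line equidistant from three distinct points of the line). Combining these gives the workhorse statement: if a triple $T$ of distinct points has at least one common unit-distance neighbour in $P$, then $T$ is affinely independent and all common neighbours of $T$ lie in a $(d-2)$-dimensional affine subspace, which is isometric to $\bR^{d-2}$.

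This yields a dimension-reduction step: if $G$ contains a complete multipartite graph with parts $W_1,\dots,W_r$, $r\geq2$, one of which, say $W_r$, is a triple, then every vertex of $W_1\cup\dots\cup W_{r-1}$ is a common neighbour of $W_r$, so $W_1\cup\dots\cup W_{r-1}$ is an almost-equidistant set in $\bR^{d-2}$ whose (induced) unit-distance graph still contains the complete multipartite graph on $W_1,\dots,W_{r-1}$. Iterating this---peeling off triples two dimensions at a time---I reduce, for $d\geq3$ odd, the forbidden graph $K_{(d+1)/2}(3,\dots,3)$ after $(d-3)/2$ steps to $K_{3,3}$ in $\bR^3$, and, for $d\geq4$ even, the forbidden graph $K_{(d+2)/2}(1,3,\dots,3)$ after $d/2-2$ steps to $K_3(1,3,3)$ in $\bR^4$ (the singleton part, never being a triple, is simply carried along). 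For $d=2$ there is nothing to iterate: a copy of $K_{2,3}$ would place three distinct points in the intersection of two unit circles, which has at most two points.

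What remains, and where I expect essentially all the difficulty, are the two base cases. Excluding $K_{3,3}$ in $\bR^3$ is quick: one part is a set of three distinct common neighbours of the other, which by the workhorse statement forces the other part to be affinely independent and its common neighbours onto a line, and a line meets a unit sphere in at most two points. Excluding $K_3(1,3,3)$ in $\bR^4$, with parts $\{v\}$, $T_1$, $T_2$, takes more care: since $v$ is a common neighbour, $T_1$ and $T_2$ are affinely independent, so the common neighbours of $T_i$ form a genuine circle $\mathcal C_i$ (a $2$-plane meeting a unit sphere in at least four points), with $\{v\}\cup T_2\subseteq\mathcal C_1$ and $\{v\}\cup T_1\subseteq\mathcal C_2$; the four concyclic points $\{v\}\cup T_2$ span only a $2$-plane, so their common neighbours also form a circle $\mathcal D$, and $T_1\subseteq\mathcal D$; since $T_1$ consists of three non-collinear points, $\mathcal D$ is the unique circle through them, hence $\mathcal D=\mathcal C_2$, and then $v\in\mathcal C_2=\mathcal D$ would be at unit distance from itself---a contradiction. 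Once these two base cases are established, the lemma follows from the reductions above, and no step uses more than elementary Euclidean geometry.
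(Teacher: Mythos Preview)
Your proof is correct and takes a genuinely different route from the paper's. The paper handles the multipartite exclusion globally: if all points of one part $V_i$ are at unit distance from all points of another part $V_j$, then expanding $\norm{a_1-b_1}^2=\norm{a_1-b_2}^2=\norm{a_2-b_1}^2=\norm{a_2-b_2}^2=1$ gives $\ipr{a_1-a_2}{b_1-b_2}=0$, so the affine hulls of the parts are pairwise orthogonal; since each part of size~$3$ with a common unit neighbour spans a $2$-plane, the total dimension is at least $2k$, which immediately contradicts $2k>d$ (odd $d$) or forces $2k=d$ and then a short circle-intersection contradiction (even $d$). You instead peel off one triple at a time, dropping two dimensions per step via the equidistant-locus fact, and push everything down to explicit base cases $K_{3,3}$ in $\bR^3$ and $K_3(1,3,3)$ in $\bR^4$. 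Your approach avoids the orthogonality insight and stays closer to the sphere-intersection picture of Lemma~\ref{lemma:sphere}, at the price of a somewhat delicate base case in $\bR^4$; the paper's orthogonality argument is shorter and treats all $d$ uniformly, but yours is more modular and would adapt readily if one wanted to exclude other multipartite patterns by induction.
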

\begin{proof}
Let $G$ be the unit-distance graph of an almost-equidistant set $P \subset\bR^d$.
Clearly, the complement of $G$ does not contain a triangle, as $P$ is almost equidistant.
The graph $G$ also does not contain a copy of $K_{d+2}$ by Corollary~\ref{corollary:clique}.
Thus it remains to show that $K_{2,3}$ does not occur as a unit-distance graph in $\bR^2$, $K_{(d+1)/2}(3,\dots,3)$ does not occur as a unit-distance graph for odd $d\geq 3$, and $K_{(d+2)/2}(1,3,\dots,3)$ does not occur as a unit-distance graph for even $d\geq 4$.

For $d=2$, Suppose that $K_{2,3}$ occurs as a unit-distance graph in $\bR^2$.
Let the class with two points be $\{p,q\}$.
The set of points that are at the same distance from both $p$ and $q$ is the intersection of two unit circles centred at $p$ and $q$, respectively, and thus contains at most two points.
Therefore, the other class cannot contain three points.

For odd $d\geq 3$, suppose for contradiction that there exist sets $V_1,\dots,V_k$, where $k=(d+1)/2$, such that each $V_i$ contains three points, and such that the distance between any two points from different $V_i$ equals $1$.
As in the proof of Lemma~\ref{lemma:sphere}, it is easy to show that each $V_i$ lies on a circle.
Furthermore, for any distinct $i,j$, $a_1-a_2\perp b_1-b_2$ for all $a_1,a_2\in V_i$ and $b_1,b_2\in V_j$.
Thus the affine hulls of the $V_i$s are pairwise orthogonal to each other, hence $V_1\cup \dots\cup V_k$ together span a space of dimension at least $2k > d$, a contradiction.

For even $d \ge 4$, suppose that there exist sets $V_1,\dots,V_k$ in $\bR^d$, where $k=d/2$, such that $V_1$ contains four points $a,b,c,d$ with $\norm{a-b}=\norm{a-c}=\norm{a-d}=1$, the sets $V_2,\dots,V_k$ each contains three points, and such that the distance between any two points from different $V_i$ equals $1$.
As in the case of odd $d$, the $V_i$ lie on circles (or a sphere in the case of $V_1$), the affine hulls of the $V_i$s have dimension at least $2$ and are pairwise orthogonal to each other, hence $V_1\cup \dots\cup V_k$ together span a space of dimension at least $2k = d$.
It follows that the affine hull of $V_1\cup \dots\cup V_k$ has dimension exactly $2k$, and that the affine hulls of all the the $V_i$s are $2$-dimensional.
In particular, we have that $a,b,c,d$ lie on a circle $C$. However, then $C$ and the unit circle with centre $a$ intersect in $3$ points, a contradiction.
\end{proof}

An abstract almost-equidistant graph $G=(V,E)$ in $\bR^d$ 
is \emph{realisable \textup{(}in $\bR^d$\textup{)}} if there is a point set $P$ in $\bR^d$, called a \emph{realisation} of $G$, and a one-to-one correspondence $f \colon P \to V$ such that, for all points $p$ and $q$ from $P$, if $\{f(p),f(q)\} \in E$, then $p$ and $q$ are at unit distance.
If $G$ is not realisable in $\bR^d$, then we say that it is \emph{non-realisable}.
For a realisable graph $G$ and its realisation $P$, we sometimes do not distinguish between the vertices of~$G$ and the points from $P$.

By Lemma~\ref{lemma:abstractGraphs}, if there is no realisable abstract almost-equidistant graphs in $\bR^d$ on $n$ vertices, then there is no almost-equidistant set in $\bR^d$ of size $n$. 
Using a simple exhaustive computer search, we enumerated all non-isomorphic graphs that are abstract almost-equidistant in $\bR^d$ for $d \in \{2,3,4,5,6\}$.
We filtered out graphs that are \emph{minimal}, meaning that any graph obtained by removing any edge from such a graph is no longer abstract almost-equidistant.
We summarise our results obtained by the computer search in Tables~\ref{tab:abstractMinimal} and~\ref{tab:abstractAll}.
More detailed description of our computations can be found in Section~\ref{subsec:computations}.

The core of the proofs of Theorems~\ref{thm:2d}, \ref{thm:3d}, and \ref{thm:4d} is to show that none of the minimal abstract almost-equidistant graphs in $\bR^d$ is realisable in $\bR^d$ for $d=2,3,4$, respectively.

\begin{table}
\centering
\begin{tabular}{l|rrrrrr|r}
 $n$& $d=2$& $d=3$& $d=4$& $d=5$&    $d=6$& $\cdots$ & $K_3$-free complement    \\
 \hline
$ 4$&     2&     2&     2&     2&        2&        &             2\\   
$ 5$&     2&     3&     3&     3&        3&        &             3\\   
$ 6$&     2&     3&     4&     4&        4&        &             4\\   
$ 7$&     1&     4&     5&     6&        6&        &             6\\   
$ 8$&     1&     5&     8&     9&       10&        &            10\\   
$ 9$&     0&     5&    10&    14&       15&        &            16\\  
$10$&      &     4&    18&    25&       29&        &            31\\  
$11$&      &     1&    22&    46&       54&        &            61\\  
$12$&      &     0&    27&   106&      130&        &           147\\  
$13$&      &      &    12&   242&      339&        &           392\\  
$14$&      &      &     3&   653&     1052&        &          1274\\   
$15$&      &      &     1&  1946&     3969&        &          5036\\   
$16$&      &      &     1&  5828&    18917&        &         25617\\   
$17$&      &      &     0& 12654&   105238&        &        164796\\   
$18$&      &      &      &  8825&   655682&        &       1337848\\
$19$&      &      &      &   340&  3971787&        &      13734745\\
$20$&      &      &      &     8&  $\ge 1$&        &     178587364\\
$21$&      &      &      &     0&  $\ge 1$&        &    2911304940\\
$22$&      &      &      &      &  $\ge 1$&        &   58919069858\\
$23$&      &      &      &      &  $\ge 1$&        & 1474647067521\\
$24$&      &      &      &      &  $\ge 1$&        & ?\\
$25$&      &      &      &      &  $\ge 1$&        & ?\\
$26$&      &      &      &      &        ?&        & ?\\
$27$&      &      &      &      &        0&        & ?\\
\end{tabular}
\caption{Numbers of minimal abstract almost-equidistant graphs in $\bR^d$ 
on $n$ vertices for $d \in \{2,3,4,5,6\}$ and some values of $n$ from $\{4,\dots,27\}$.
For comparison, the last column contains numbers of minimal $n$-vertex graphs with triangle-free complements~\cite{Brandt, Brinkmann, oeis:trianglefreegraphs}.
The entries denoted by ``?'' are not known.}
\label{tab:abstractMinimal}
\end{table}

\begin{table}
\centering
\begin{tabular}{l|rrrrrr|r}
 $n$& $d=2$& $d=3$& $d=4$&          $d=5$&    $d=6$& $\cdots$ & $K_3$-free complement\\
 \hline
 $4$&     6&     7&     7&              7&        7&          &              7\\
 $5$&     7&    13&    14&             14&       14&          &             14\\
 $6$&     9&    29&    37&             38&       38&          &             38\\
 $7$&     2&    50&    97&            106&      107&          &            107\\
 $8$&     1&    69&   316&            402&      409&          &            410\\
 $9$&     0&    35&   934&           1817&     1888&          &           1897\\
$10$&      &     7&  2362&          11132&    12064&          &          12172\\
$11$&      &     1&  2814&          86053&   103333&          &         105071\\
$12$&      &     0&   944&         803299&  1217849&          &        1262180\\
$13$&      &      &    59&        7623096& 19170728&          &       20797002\\
$14$&      &      &     4&       58770989&        ?&          &      467871369\\
$15$&      &      &     1& $\le$305976655&        ?&          &    14232552452\\
$16$&      &      &     1&              ?&        ?&          &   581460254001\\
$17$&      &      &     0&              ?&        ?&          & 31720840164950\\
\end{tabular}
\caption{Numbers of all abstract almost-equidistant graphs  in $\bR^d$ 
on $n$ vertices for $d \in \{2,3,4,5,6\}$ and some values of $n$ from $\{4,\dots,17\}$.
For comparison, the last column contains numbers of $n$-vertex graphs with triangle-free complements~\cite{McKay, oeis:trianglefreegraphs}.
The entries denoted by ``?'' are not known.}
\label{tab:abstractAll}
\end{table}

\subsection{Proof of Theorem~\ref{thm:2d}}

We show that $f(2) = 7$ and that, up to congruence, there is only one planar almost-equidistant set with $7$ points, namely the Moser spindle.

A computer search shows that, up to isomorphism, there are exactly two abstract almost-equidistant graphs on $7$ vertices.
One of them is the Moser spindle (Figure~\ref{fig1}), which is clearly uniquely realisable in the plane up to congruence.
The other graph (part~(a) of Figure~\ref{fig-abstractD2}) is the graph of the square antiprism with one point removed, which is easily seen to be non-realisable in the plane.
Lemma~\ref{lemma:abstractGraphs} thus implies that the Moser spindle is the unique (up to congruence) almost-equidistant set in the plane on $7$ points.

\begin{figure}[ht]
\centering
\includegraphics[scale=1]{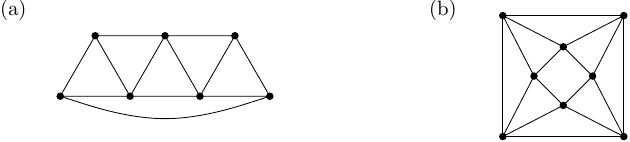}
\caption{Non-realisable abstract almost-equidistant graphs in $\bR^2$ on $7$ and $8$ vertices.}
\label{fig-abstractD2}
\end{figure}

There is a unique abstract almost-equidistant graph on $8$ vertices, namely the graph of the square antiprism (part~(b) of Figure~\ref{fig-abstractD2}), which is not realisable in the plane because it contains a non-realisable subgraph, namely the non-realisable abstract almost-equidistant graph on $7$ vertices drawn in part~(a) of Figure~\ref{fig-abstractD2}.
Thus, by Lemma~\ref{lemma:abstractGraphs}, there is no almost-equidistant set in the plane on $8$ points.
 
\subsection{Proof of Theorem~\ref{thm:3d}}

We prove $f(3)=10$ and that, up to congruence, there is only one almost-equidistant set in $\bR^3$ with $10$ points.

A computer search shows that there is exactly one abstract almost-equidistant graph $G_{11}$ in $\bR^3$ on $11$ vertices (Figure~\ref{fig-abstractD3_11}), and exactly $7$ abstract almost-equidistant graphs in~$\bR^3$ on $10$ vertices, four of which are minimal (Figures~\ref{fig2} and \ref{fig-abstractD3_10}).

\begin{figure}[ht]
\centering
\includegraphics[scale=1.1]{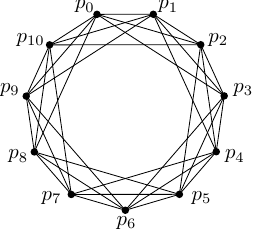}
\caption{The non-realisable abstract almost-equidistant graph $G_{11}$ in $\bR^3$ on $11$ vertices.}
\label{fig-abstractD3_11}
\end{figure}

Although $G_{11}$ contains the first graph in Figure~\ref{fig-abstractD3_10}, and we will show that none of the graphs in Figure~\ref{fig-abstractD3_10} are realisable in $\bR^3$, it is worth observing the following direct argument for the non-realisability of $G_{11}$.
Suppose for contradiction that it is realisable in $\bR^3$.
We label the vertices $p_0$ to $p_{10}$, with subscripts modulo $11$,
such that $\{p_i,p_j\}$ is an edge of $G_{11}$ if and only if $i-j\equiv \pm 1,\pm 2\pmod{11}$.
There are $11$ equilateral tetrahedra $p_i p_{i+1}p_{i+2}p_{i+3}$ in a realisation of $G_{11}$.
Let $T\colon\bR^3\to\bR^3$ be the unique isometry that maps the tetrahedron $p_0p_1p_2p_3$ to $p_1p_2p_3p_4$, that is, $T(p_i)=p_{i+1}$ for $i=0,1,2,3$.
Note that the vertex $p_{i+4}$ is uniquely determined by $p_i p_{i+1}p_{i+2}p_{i+3}$.
In fact $p_{i+4}$ is the reflection of $p_i$ through the centroid of the triangle $p_{i+1}p_{i+2}p_{i+3}$.
It follows that $T(p_i)=p_{i+1}$ for every $i$.
Therefore the centroid $c \colonequals \frac{1}{11}\sum_{i=0}^{10}p_i$ is a fixed point of $T$ and all points $p_i$ are on a sphere with centre $c$.
However, the points $p_0,p_1,p_2,p_3,p_4$ are easily seen not to lie on a sphere (for example, by using Lemma~\ref{lemma:sphere}) and we have a contradiction.
Thus, by Lemma~\ref{lemma:abstractGraphs}, there is no almost-equidistant set in $\bR^3$ on $11$ points.

\begin{figure}[ht]
\centering
\includegraphics[scale=1.05]{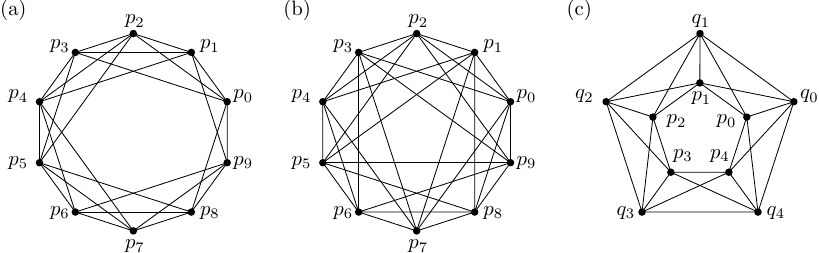}
\caption{Non-realisable abstract almost-equidistant graphs in $\bR^3$ on $10$ vertices.}
\label{fig-abstractD3_10}
\end{figure}

We have already described a realisation in $\bR^3$ of the graph from Figure~\ref{fig2} in the proof of Theorem~\ref{thm:lowerbound}.
It is unique up to congruence, and it is easy to check that there are no further unit distances between vertices.
We next show that the other three graphs are not realisable in $\bR^3$.

The graph in part~(a) of Figure~\ref{fig-abstractD3_10} consists of six copies of $K_4$, namely $p_0p_1p_2p_3$, $p_1p_2p_3p_4$, $p_2p_3p_4p_5$, $p_4p_5p_6p_7$, $p_5p_6p_7p_8$, $p_6p_7p_8p_9$, together with the edges $p_0p_9$, $p_1p_9$ and $p_0p_8$.
We may choose coordinates $p_4=(0,0,1/2)$, $p_5=(0,0,-1/2)$, $p_2=(\sqrt{3}/2,0,0)$, $p_3=(1/(2\sqrt{3}), \sqrt{2/3},0)$, and then we will have $p_7=(\frac{\sqrt{3}}{2}\cos\beta,\frac{\sqrt{3}}{2}\sin\beta,0)$ for some angle $\beta$, and $p_6=(\frac{\sqrt{3}}{2}\cos(\beta+\alpha),\frac{\sqrt{3}}{2}\sin(\beta+\alpha),0)$, where $\cos\alpha=1/3$.
Thus $p_6=(\frac{1}{2\sqrt{3}}\cos\beta\mp\frac{\sqrt{2}}{\sqrt{3}}\sin\beta, \frac{1}{2\sqrt{3}}\sin\beta\mp\frac{\sqrt{2}}{\sqrt{3}}\cos\beta, 0)$.
It is now simple to determine the coordinates of the remaining points by taking reflections.
In particular, we obtain $p_1 = (\frac{4}{3\sqrt{3}}, \frac{2\sqrt{2}}{3\sqrt{3}},\frac{5}{6})$ and $p_9 = (\frac{20}{9\sqrt{3}}\cos\beta\mp\frac{10\sqrt{2}}{9\sqrt{3}}\sin\beta,\frac{20}{9\sqrt{3}}\sin\beta\pm\frac{10\sqrt{2}}{9\sqrt{3}}\cos\beta,-\frac{1}{18})$.
If we now calculate the distance $\norm{p_1-p_9}$, we obtain either $\sqrt{\frac{112}{27}-\frac{80}{27}\cos\beta}$ or $\sqrt{\frac{112}{27}-\frac{80}{81}\cos\beta - \frac{160\sqrt{2}}{81}\sin\beta}$, depending on the sign of $\sin\alpha$.
However, both these expressions are larger than $1$.

The graph in part~(b) of Figure~\ref{fig-abstractD3_10} is the complement of the Petersen graph, and contains $5$ copies of the graph of the regular octahedron, one for every induced matching of three edges in the Petersen graph.
Each such octahedron is uniquely realisable in~$\bR^3$.
The octahedron $O_1$ with diagonals $\{p_0p_6, p_7p_9, p_3p_8\}$ and the octahedron $O_2$ with diagonals $\{p_0p_4, p_1p_3, p_2p_7\}$ have a common face $p_0p_3p_7$.
The only way to realise $O_1$ and $O_2$ is with the opposite faces $p_6p_8p_9$ and $p_4p_1p_2$ in two planes parallel to the plane of $p_0p_3p_7$.
Since an octahedron with edges of unit length has width $\sqrt{2/3}$,  we obtain that the distance between $p_4$ and $p_6$ (both opposite to $p_0$ in $O_1$ and $O_2$, respectively) is $2\sqrt{2/3}$, which is impossible, as they have to be at unit distance.

The graph in part~(c) of Figure~\ref{fig-abstractD3_10} is a ring of $5$ unit tetrahedra $p_i q_i p_{i+1}q_{i+1}$, $i=0,\dots,4$ with indices taken modulo $5$, with two successive ones joined at an edge $p_iq_i$.
Suppose for contradiction that we have a realisation of this graph in $\bR^3$.
Let $m_i$ be the midpoint of $p_iq_i$ for each $i \in \{0,\dots,4\}$.
The distance between $m_i$ and $m_{i+1}$ is $r=1/\sqrt{2}$.
Since each $p_iq_i$ is orthogonal to $p_{i+1}q_{i+1}$, it follows that $m_i$, $p_{i-1}q_{i-1}$, and $p_{i+1}q_{i+1}$ lie in the same plane and the lines $p_{i-1}q_{i-1}$ and $p_{i+1}q_{i+1}$ are tangent to the circle $C_i$ in this plane with centre $m_i$ and radius $r$.
If $C_1$ and $C_3$ were in the same plane, then the edges $p_4q_4$ and $p_0q_0$ of the tetrahedron $p_0q_0p_4q_4$ would have to be coplanar.
Therefore $C_1$ and $C_3$ are in different planes, but they have the same tangent line $p_2q_2$ touching both at $m_2$.
It follows that they lie on a unique sphere $\Sigma$.
The lines $p_0q_0$ and $p_4q_4$ are tangent to $\Sigma$ at $m_0$ and $m_4$, respectively.
It follows that the plane $\Pi_0$ through $m_0$ orthogonal to $p_0q_0$ and the plane $\Pi_4$ through $m_4$ orthogonal to $p_4q_4$ both contain the centre $c$ of $\Sigma$.
Thus $c\in\Pi_0\cap\Pi_4$.
Since both $\Pi_0$ and $\Pi_4$ contain $m_0$ and $m_4$, it follows that $\Pi_0\cap\Pi_4$ is the line $m_0m_4$, and  it follows that $c\in m_0m_4$, that is, $\norm{m_0-m_4}=r$ is a diameter of $\Sigma$.
However, $\Sigma$ contains two circles on its boundary of radius $r$, a contradiction.

Thus there is only one realisable abstract almost-equidistant graph in $\bR^3$ on $10$ vertices and, by Lemma~\ref{lemma:abstractGraphs}, there is also a unique (up to congruence) almost-equidistant set in $\bR^3$ on $10$ points.

\subsection{Proof of Theorem~\ref{thm:4d}}

We want to show that $12 \le f(4) \le 13$.
To do so, we use the following result, which says that the graph in Figure~\ref{fig-abstractD4_10} is not realisable in $\bR^4$.

\begin{figure}[ht]
\centering
\includegraphics[scale=1.1]{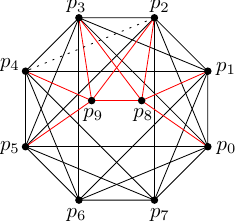}
\caption{A non-realisable abstract almost-equidistant graph $G_{10}$ in $\bR^4$ on $10$ vertices.
The missing edge $\{p_2,p_4\}$ is denoted by a dotted segment.}
\label{fig-abstractD4_10}
\end{figure}

\begin{lemma}\label{lemma:nonrealisable}
The graph $G_{10}$ in Figure~\ref{fig-abstractD4_10} is not realisable in $\bR^4$.
\end{lemma}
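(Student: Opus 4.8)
The plan is to proceed exactly as in the non-realisability arguments inside the proof of Theorem~\ref{thm:3d}: use the clique structure of $G_{10}$ to pin a hypothetical realisation down to finitely many parameters (a few discrete choices and one or two continuous ones), and then check that every such choice violates one of the edges of $G_{10}$.

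Concretely, I would first list the maximal cliques of $G_{10}$. Since a realisation lies in $\bR^4$, by Corollary~\ref{corollary:clique} no clique has more than five vertices, and a $K_5$ must be realised as a regular unit $4$-simplex, which is rigid up to isometry. By Lemma~\ref{lemma:sphere}, once a $K_4$ has been placed (a regular unit tetrahedron spanning a hyperplane), any further vertex adjacent to all four of its vertices lies on a $0$-sphere, namely it is one of the two reflections of the ``opposite'' vertex through the centroid of a triangular face, while a vertex adjacent to only three placed mutually adjacent vertices lies on a circle. So I would fix coordinates so that one well-chosen clique of $G_{10}$ sits in a standard position, and then add the remaining vertices one at a time, each adjacent to a triangle or a tetrahedron already placed, so that each step costs at most one angular parameter or one $\pm$ sign. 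Writing each new point as the reflection of an already-placed point through the centroid of a common face keeps all coordinates explicit, exactly as in the treatment of $G_{11}$ and of the graph in part~(a) of Figure~\ref{fig-abstractD3_10}.

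The point of singling out the non-edge $\{p_2,p_4\}$ is that it is the one place where the local clique structure does not by itself force a unit distance, so after the propagation the coordinates of $p_2$ and $p_4$ (and perhaps one more vertex) still depend on the surviving free parameters; substituting into the unit-distance equations coming from the still-unused edges of $G_{10}$ then yields a system that I expect to be inconsistent, most likely because some forced distance between two adjacent vertices comes out strictly larger than $1$, just as $\norm{p_1-p_9}>1$ in the Figure~\ref{fig-abstractD3_10}(a) argument. (An alternative, should the propagation instead force all ten points onto a common $3$-sphere, is to invoke the spherical bound of Bezdek and Langi quoted above.) The main obstacle is the bookkeeping: each vertex placed over a tetrahedron contributes a $\pm$ sign, so there are several branches to rule out, and the resulting distance expressions are unwieldy trigonometric functions of the angular parameter; the real work is to order the placement of vertices so that as few parameters as possible survive to the last step, and to organise the case distinction so that a short estimate eliminates each branch.
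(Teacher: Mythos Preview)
Your proposal is only a plan, not a proof, and it misses the structural observation that makes the paper's argument short. The decisive point is that $G_{10}$ contains a copy of $K_{4,4}$ on the classes $\{p_1,p_3,p_5,p_7\}$ and $\{p_0,p_2,p_4,p_6\}$. As in the proof of Lemma~\ref{lemma:abstractGraphs}, in any realisation these two $4$-tuples must lie on circles in orthogonal $2$-planes through a common centre. The induced subgraph on $\{p_1,p_3,p_5,p_7\}$ is a $4$-cycle, so that circle has radius $1/\sqrt{2}$; the unit distances across the bipartition then force the second circle to have the same radius, and since the induced subgraph on $\{p_0,p_2,p_4,p_6\}$ is a path of length~$3$, those four points are also the vertices of a unit square. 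In particular the ``missing'' distance $\norm{p_2-p_4}$ is \emph{forced} to equal~$1$, so $p_0,\dots,p_7$ realise the full cross-polytope, which is rigid. With explicit coordinates for it, each of $p_8$ and $p_9$ is adjacent to a tetrahedron of cross-polytope vertices and is therefore determined up to a single $\pm$ sign; checking the four resulting values of $\norm{p_8-p_9}$ finishes the proof.

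By contrast, your clique-propagation scheme, starting from the unique $K_5$ (namely $p_0p_1p_2p_3p_8$), leaves several of the remaining vertices constrained only by triangles, hence lying on circles; you would carry three angular parameters subject to coupled trigonometric constraints before you could test any remaining edge. That route is feasible in principle but much heavier than necessary. The non-edge $\{p_2,p_4\}$ is not the soft spot you expected: it is precisely the place where the global $K_{4,4}$ structure rigidifies the picture, and recognising this is what eliminates all continuous parameters.
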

\begin{proof}
The graph $G_{10}$ is built up by starting with a cross-polytope with vertices $p_0,\dots,p_7$ and diagonals $p_0p_4$, $p_1p_5$, $p_2p_6$, $p_3p_7$, then removing edge $p_2p_4$, and then adding vertices $p_8$, $p_9$ and edges $p_8p_0$, $p_8p_1$, $p_8p_2$, $p_8p_3$, $p_8p_9$, $p_9p_2$, $p_9p_3$, $p_9p_4$, $p_9p_5$.

Suppose for contradiction that $G_{10}$ has a realisation in $\bR^4$.
We first show that the distance between $p_2$ and $p_4$ is necessarily $1$.
Note that $G_{10}$ contains a copy of $K_{4,4}$ with classes $V_1=\{p_1,p_3,p_5,p_7\}$ and $V_2=\{p_0,p_2,p_4,p_6\}$ as a subgraph.
Similarly as in the proof of Lemma~\ref{lemma:abstractGraphs}, it follows that $V_1$ and $V_2$ each lies on a circle with the same centre, which we take as the origin, in orthogonal $2$-dimensional planes.
The induced subgraph $G_{10}[p_1,p_3,p_5,p_7]$ of $G_{10}$ is a $4$-cycle, hence the circle on which $V_1$ lies has radius $1/\sqrt{2}$.
Therefore the circle on which $V_2$ lies also has radius $1/\sqrt{2}$ and, since the induced subgraph $G_{10}[p_0,p_2,p_4,p_6]$ of $G_{10}$ is a path of length $3$, $p_0,p_2,p_4,p_6$ also have to be the vertices of a square and $\norm{p_2-p_4}=1$.

Therefore we have a cross-polytope with diagonals $p_ip_{i+4}$, $i=0,1,2,3$.
The graph of a cross-polytope can be realised in only one way in $\bR^4$ up to isometry.
Thus we may choose coordinates so that $p_0=-p_4=(1/\sqrt{2},0,0,0)$, $p_1=-p_5=(0,1/\sqrt{2},0,0)$, $p_2=-p_6=(0,0,1/\sqrt{2},0)$, $p_3=-p_7=(0,0,0,1/\sqrt{2})$.
Since $p_0p_1p_2p_3p_8$ is a clique, $p_8=(\lambda,\lambda,\lambda,\lambda)$, where $\lambda=(1\pm\sqrt{5})/(4\sqrt{2})$.
Since $p_2p_3p_4p_5p_9$ is a clique, we obtain similarly that $p_9=(-\mu,-\mu,\mu,\mu)$ where $\mu=(-1\pm\sqrt{5})/(4\sqrt{2})$.
However, then the distance $\norm{p_8-p_9}$ is one of the values $(\pm 1+\sqrt{5})/2$ or $\sqrt{3/2}$, but it has to equal $1$, a contradiction.
\end{proof}

Now, we can proceed with the proof of Theorem~\ref{thm:4d}.
The lower bound of $f(4) \ge 12$ follows from Theorem~\ref{thm:lowerbound}.
A computer search shows that there are no abstract almost-equidistant graphs in $\bR^4$ on $17$ or more vertices, a unique one on $16$ vertices, a unique one on $15$ vertices, and four on $14$ vertices, three of which are minimal; see Figures~\ref{fig-abstractD4_14_1} and~\ref{fig-abstractD4_14_2}.

The first of these three consists of the graphs of a $3$-dimensional octahedron $q_0\cdots q_5$ and a  $4$-dimensional cross-polytope $p_0\cdots p_7$ with a biregular graph between their respective vertex sets, as in part~(a) of Figure~\ref{fig-abstractD4_14_1}.
It contains the graph $G_{10}$ as a subgraph on vertices $\{p_0,\dots, p_7\}\cup\{q_0,q_1\}$.

\begin{figure}[ht]
\centering
\includegraphics[scale=1.05]{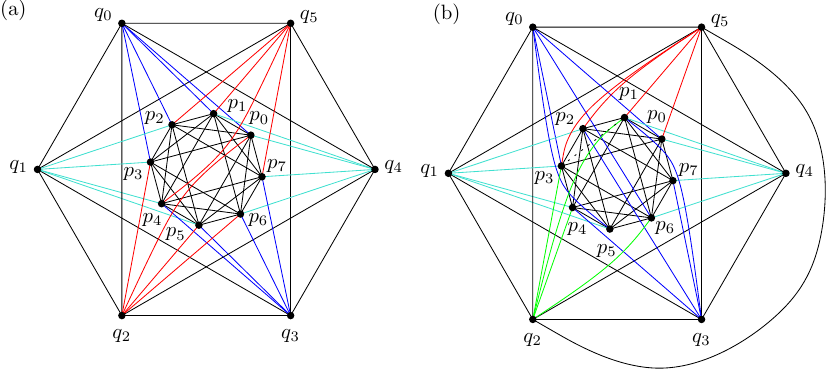}
\caption{Two non-realisable abstract almost-equidistant graphs in $\bR^4$ on $14$ vertices.
We use colours on edges between the two cross-polytopes to emphasise the symmetries of the graphs.
The missing edge $\{p_1, p_3\}$ in (b) is denoted by a dotted segment.}
\label{fig-abstractD4_14_1}
\end{figure}

The second of these graphs (part~(b) of Figure~\ref{fig-abstractD4_14_1}) contains the graph $G_{10}$ as an induced subgraph on vertices $\{p_0\cdots p_7\} \cup \{q_1,q_5\}$.
Note that the edge $\{p_1,p_3\}$ is missing.
Thus these two graphs are not realisable in $\bR^4$ by Lemma~\ref{lemma:nonrealisable}.

The last of the three graphs, called $G_{14}$ and shown in Figure~\ref{fig-abstractD4_14_2}, is the complement of the graph that is obtained from the cycle $C_{14}$ by adding the diagonals and chords of length $4$.
We show that $G_{14}$ is non-realisable in $\bR^4$ using an analogous approach as we  used to show that the graph $G_{11}$ is not realisable in $\bR^3$.

Suppose for contradiction that the graph $G_{14}$ is realisable in $\bR^4$.
We label the vertices $p_0$ to $p_{13}$, with subscripts modulo $14$,
such that $\{p_i,p_j\}$ is an edge of $G_{14}$ if and only if $i-j\equiv \pm 2,\pm 3, \pm 5,\pm 6\pmod{14}$.
There are $14$ equilateral $4$-simplices $p_i p_{i+2}p_{i+5}p_{i+8}p_{i+11}$ in a realisation of $G_{14}$.
Let $T\colon\bR^4\to\bR^4$ be the unique isometry that maps the simplex $p_0p_2p_5p_8p_{11}$ to $p_3p_5p_8p_{11}p_0$, that is, $T(p_i)=p_{i+3}$ for $i=0,2,5,8,11$.
Note that the vertex $p_{i+3}$ is the reflection of $p_{i+2}$ through the centroid of the tetrahedron $p_i p_{i+5}p_{i+8}p_{i+11}$ and thus it is uniquely determined by $p_i p_{i+5}p_{i+8}p_{i+11}$.
It follows that $T(p_i)=p_{i+3}$ for every~$i$.
Therefore the centroid $c \colonequals \frac{1}{14}\sum_{i=0}^{13}p_i$ is a fixed point of $T$ and all points $p_i$ are on a sphere with centre $c$.
However, the points $p_0,p_2,p_3,p_5,p_8,p_{11}$ are easily seen not to lie on a sphere (for example, by using Lemma~\ref{lemma:sphere}), a contradiction.

Using Lemma~\ref{lemma:abstractGraphs}, we conclude that every almost-equidistant set in $4$-space of maximum cardinality has at most $13$ points.

\begin{figure}[ht]
\centering
\includegraphics[scale=1]{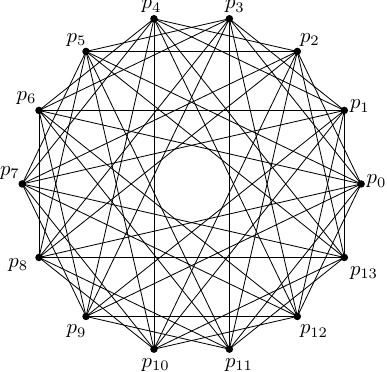}
\caption{The non-realisable abstract almost-equidistant graph $G_{14}$ in $\bR^4$ on $14$ vertices.}
\label{fig-abstractD4_14_2}
\end{figure}

\subsection{Proof of Theorem~\ref{thm:5d}}
Here, we prove the estimates $f(6)\geq 18$, $f(7)\geq 20$, and $f(9)\geq f(8)\geq 24$ using the construction of Larman and Rogers~\cite{Larman-Rogers} that gives $f(5) \geq 16$.
The computer search for the upper bounds is described in the next section.

We first briefly describe the Larman--Rogers construction of an almost-equidistant set of $16$ points in~$\bR^5$.
Let $V$ be the set of vertices of the cube $\{\pm 1\}^5$ in $\bR^5$ with an odd number of positive signs.
It is easy to check that $\card{V}=16$ and that for any three points in $V$, some two differ in exactly two coordinates.
Moreover, any two points differ in either two or four coordinates and then their distance is $\sqrt{8}$ or $4$, respectively.
It follows that $S \colonequals \frac{1}{\sqrt{8}}V$ is an almost-equidistant set in $\bR^5$.
Also note that the norm of every vector from $S$ is exactly $\sqrt{5/8}$.

Consider $\bR^6=E_1\oplus \bR e$ where $\dim E_1=5$ and $e$ is a unit vector orthogonal to $E_1$.
We place $S$ in $E_1$ and show that $S\cup\{\pm \sqrt{3/8} e\}$ is an almost-equidistant set of $18$ points in $\bR^6$.
Since $S$ is an almost-equidistant set in $\bR^5$, it suffices to check triples of points from $S\cup\{\pm \sqrt{3/8} e\}$ containing points from $\{\pm \sqrt{3/8} e\}$.
Let $T$ be such a triple.
Then $T$ contains a point $p$ from $S$ and $q$ from $\{\pm \sqrt{3/8} e\}$.
Since $p$ and $q$ are orthogonal, we obtain $\norm{p-q}^2 = \ipr{p}{p} - 2\ipr{p}{q} + \ipr{q}{q} = 5/8+0+3/8=1$.
Thus $T$ contains a pair of points at unit distance and, consequently, $f(6) \ge 18$.

Next consider $\bR^7=E_1\oplus E_2$, where $\dim E_1=5$, $\dim E_2=2$, and $E_1\perp E_2$.
We place~$S$ in~$E_1$ and $S'\colonequals\{(\pm\frac12,\pm\frac{1}{\sqrt{8}})\}$ in $E_2$.
We show that $S\cup S'$ is an almost-equidistant set of~$20$ points in $\bR^7$.
Again, since $S$ is an almost-equidistant set in $\bR^5$, we only need to check triples $T$ of points from $S\cup S'$ containing some of the vectors from $S'$.
Analogously as before, the distance between every point from $S'$ and every point of $S$ is $1$, as the norm of every vector from $S'$ is $\sqrt{3/8}$ and $E_1\perp E_2$.
We can thus assume $T \subseteq S'$.
If two vectors from $S'$ differ only in the first coordinate, then their distance is $1$.
Since every triple of vectors from $S'$ contains a pair of vectors that differ only in the first coordinate, we obtain that $T$ contains a pair of points at unit distance and thus $f(7) \geq 20$. 

Next consider $\bR^8=E_1\oplus E_3$ where $\dim E_1=5$ and $\dim E_3=3$.
We place $S$ in $E_1$ and $S' \colonequals \frac{1}{\sqrt{8}}\{\pm 1\}^3$ in $E_2$.
Again, it suffices to check triples $T$ of points from $S \cup S'$ with $T \cap S' \neq \emptyset$.
The distance between every point in $S$ and every point in $S'$ again equals $1$ and thus we can assume $T \subseteq S'$.
Every triple of points from $S'$ contains a pair of points that differ in exactly two coordinates and so they are at distance $1$.
Thus $S'$ is an almost-equidistant set of $8$ points.
It follows that $S\cup S'$ is an almost-equidistant set of $24$ points in~$\bR^8$, which implies that $f(9)\geq f(8)\geq 24$.

\subsection{The computer search}
\label{subsec:computations}

In this subsection we describe how we computed the entries as stated in Tables~\ref{tab:abstractMinimal} and~\ref{tab:abstractAll}, and also the upper bounds as stated in Table~\ref{table1}.

First, we describe our simple approach to generate all $n$-vertex abstract almost-equidistant graphs in~$\bR^d$ for given $n$ and $d$.
We start with a single vertex and repeatedly add a new vertex and go through all possibilities of joining the new vertex to the old vertices.
For each possibility of adding edges, we check if the resulting graph contains one of the two forbidden subgraphs and that its complement does not contain a triangle.
We use two tricks to speed this up.
First, when adding a vertex, we can assume that the newly inserted vertex has minimum degree in the extended graph.
Secondly, we only have to go through all possibilities of adding at least $n-d-1$ new edges, where $n$ is the number of vertices before extending the graph.
This is because the degree of the newly added vertex has to be at least $n-d-1$, since the complement of an abstract almost-equidistant graph $G$ is triangle-free, hence the non-neighbours of each vertex induce a clique in $G$, which has at most $d+1$ vertices.
To find all minimal graphs, we repeatedly attempt to remove an edge and check that the complement is still triangle-free.
Once this is no longer possible, we know that we have a minimal abstract almost-equidistant graph.

We implemented this approach in Sage and used it to obtain all abstract almost-equidistant graphs in $\bR^d$ for $d\in\{2,3,4\}$.
To find all abstract almost-equidistant graphs in $\mathbb{R}^5$ on at most $15$ vertices and in $\mathbb{R}^6$ on at most $13$ vertices, we used a C++ implementation of this approach.
The obtained numbers of abstract almost-equidistant graphs are summarized in Table~\ref{tab:abstractAll}.

Unfortunately, our program was not able to find all minimal abstract almost-equidistant graphs in $\mathbb{R}^d$ on $n$ vertices for $d \geq 5$ and large~$n$ in reasonable time.
To do this, we used the programs \emph{Triangleramsey}~\cite{triangleramsey:website, Brinkmann} by Brinkmann, Goedgebeur, and Schlage-Puchta and \emph{MTF}~\cite{mtf:website, Brandt} by Brandt, Brinkmann, and Harmuth.
These programs generate all minimal $K_{d+2}$-free graphs with no $K_3$ in their complement, the so-called \emph{Ramsey $(3,d+2)$-graphs}.
For each Ramsey $(3,d+2)$-graph on the output we tested whether it is a minimal abstract almost-equidistant graph in $\mathbb{R}^d$ using a simple C++ program that checks forbidden subgraphs from Lemma~\ref{lemma:abstractGraphs}.
This allowed us to find all minimal abstract almost-equidistant graphs in~$\bR^5$ and all minimal abstract almost-equidistant graphs in $\mathbb{R}^6$ with at most $19$ vertices; see Table~\ref{tab:abstractMinimal}.
We were also able to find some minimal abstract almost-equidistant graphs in~$\mathbb{R}^6$ on $25$ vertices.

To improve the upper bounds on~$f(d)$ for $d \in \{5,6,7\}$, we checked forbidden subgraphs in all minimal Ramsey $(3,7)$-graphs on $21$ and $22$ vertices, minimal Ramsey $(3,8)$-graphs on $27$ vertices, and minimal Ramsey $(3,9)$-graphs on $35$ vertices, respectively.
A complete list of these graphs is available on the website~\cite{McKay_ramsey3n_website} of McKay.
Since none of these graphs are abstract almost-equidistant, we obtain $f(5) \le 20$, $f(6) \le 26$, and $f(7) \le 34$; see Table~\ref{table1}.

The source code of our programs and the files are available on a separate  website~\cite{program_manfred}.

\section{A more general setting}
\label{section:general}

We consider the following natural generalization of the problem of determining the maximum sizes of almost-equidistant sets.
For positive integers $d$, $k$, and $l$ with $l \leq k$, let $f(d,k,l)$ be the maximum size of a point set $P$ in $\mathbb{R}^d$ such that among any $k+1$ points from $P$ there are at least $l+1$ points that are pairwise at unit distance.
Since every subset of $\mathbb{R}^d$ with all pairs of points at unit distance has size at most $d+1$, we have $f(d,1,1)=d+1$ for every $d$.
In the case $k=2$ and $l=1$, Theorems~\ref{thm:lowerbound} and~\ref{thm1} give $2d+4 \le f(d,2,1) \leq O(d^{3/2})$ for every $d \ge 3$.
In this section, we discuss the problem of determining the growth rate of $f(d,k,l)$ for larger values of $d$, $k$, and $l$.

A similar problem, where the notion of unit distance is replaced by orthogonality, has been studied by several authors~\cite{Alon-Szegedy, Deaett, Furedi-Stanley, Rosenfeld}.
More specifically, for positive integers $d$, $k$, and $l$ with $l \leq k$, let $\alpha(d,k,l)$ be the maximum size of a set $V$ of nonzero vectors from $\mathbb{R}^d$ such that among any $k+1$ vectors from $V$ there are at least $l+1$ pairwise orthogonal vectors.
F\"{u}redi and Stanley~\cite{Furedi-Stanley} showed that $\alpha(d,k,l) \le (1+o(1))\sqrt{\pi d/(2l)}((l+1)/l)^{d/2-1}k$.
Alon and Szegedy~\cite{Alon-Szegedy} used a probabilistic argument to show the following lower bound on $\alpha(d,k,l)$.

\begin{theorem}[\cite{Alon-Szegedy}]
\label{thm:AlonSzegedy}
For every fixed positive integer $l$ there are some $\delta=\delta(l)>0$ and $k_0(l)$ such that for every $k \ge k_0(l)$ and every $d \geq 2\log{k}$,
\[\alpha(d,k,l) \geq d^{\delta\log{(k+2)}/\log{\log{(k+2)}}},\]
where the logarithms are base 2.
\end{theorem}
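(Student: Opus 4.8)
The plan is to obtain the bound by the probabilistic method, after first rephrasing the defining property in graph-theoretic terms and separating the role of $l$ from that of $k$.

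\emph{Reformulation and reduction.} For a finite set $V$ of nonzero vectors in $\bR^d$, let $H(V)$ be its \emph{orthogonality graph}, with vertex set $V$ and an edge joining two vectors exactly when they are orthogonal. The condition ``among any $k+1$ vectors of $V$ some $l+1$ are pairwise orthogonal'' is precisely the condition that every induced subgraph of $H(V)$ on $k+1$ vertices contains a copy of $K_{l+1}$. By the Erd\H{o}s--Szekeres estimate $R(l+1,m)\le\binom{l+m-1}{l}$, this holds whenever $H(V)$ has \emph{no $m$ pairwise non-orthogonal vectors} (i.e.\ $H(V)$ has no independent set of size $m$) and $R(l+1,m)\le k+1$; since $l$ is fixed we may take $m=\lfloor c_l(k+1)^{1/l}\rfloor$, so that $\log m=\Theta_l(\log k)$ and $\log\log m=\log\log(k+2)+O_l(1)$. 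Thus it suffices to prove: for every $d\ge 2\log k$ there is a set of at least $d^{\delta\log(k+2)/\log\log(k+2)}$ nonzero vectors in $\bR^d$ with no $m$ of them pairwise non-orthogonal. It is also convenient to observe that the bound is vacuous, hence trivially true, once $d^{\delta\log(k+2)/\log\log(k+2)}\le k$ — that is, once $d\le(\log(k+2))^{1/\delta}$ — so one only needs to handle $d>(\log(k+2))^{1/\delta}$.

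\emph{The construction.} For $d$ in this range the plan is a random construction: take $N=\lceil d^{\delta\log(k+2)/\log\log(k+2)}\rceil$ vectors chosen independently and essentially uniformly from a discrete family of vectors in $\bR^d$ — for instance (suitably scaled) $0/1$-vectors of a common weight $s$, so that two of them are orthogonal iff their supports are disjoint — with the weight $s$ calibrated so that two random such vectors are orthogonal with probability bounded away from $1$ while the family still has more than $N$ members. One then shows that with positive probability none of the $m$-element subsets of these $N$ vectors is pairwise non-orthogonal, by bounding the expected number of such subsets; the parameters are tuned so that this expectation drops below $1$ exactly when $d>(\log(k+2))^{1/\delta}$ and $k\ge k_0(l)$, which is what fixes the admissible $\delta=\delta(l)$ and the threshold $k_0(l)$. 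A variant, closer to the original argument, is to realise a near-extremal Ramsey graph (a $K_m$-free graph whose complement has small clique number) by an orthogonal-type vector representation in $\bR^d$, and, if needed, to amplify a moderate-dimensional such configuration by tensoring with an orthonormal system placed in the remaining coordinates — an operation which multiplies the number of vectors while leaving the maximum pairwise-non-orthogonal set unchanged.

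\emph{Main obstacle.} The heart of the matter is the estimate on the expected number of pairwise non-orthogonal $m$-subsets. Here one must control the dependencies among the $\binom m2$ events ``the supports of the $i$th and $j$th vectors meet'' well enough to get a bound of roughly $p^{\Omega(m^2)}$, rather than the much weaker $p^{O(m)}$ that a naive greedy conditioning yields; equivalently, one needs a genuinely $d$-dimensional construction that is close to extremal for the relevant Ramsey number and valid uniformly over the whole range $(\log(k+2))^{1/\delta}<d$ with $k\ge k_0(l)$ — in particular also when $d$ is very large compared with $\log k$, where simple products of a fixed base and small-support random vectors are too wasteful. Getting this right is exactly what forces the exponent $\log(k+2)/\log\log(k+2)$ rather than a larger power of $d$. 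The remaining steps — the Ramsey conversion, the trivial range, and checking that the calibrated weight $s$ stays in the admissible window — are routine once the core estimate is in hand.
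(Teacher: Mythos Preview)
The paper does not give a proof of this statement at all: Theorem~\ref{thm:AlonSzegedy} is quoted as a result of Alon and Szegedy and is used as a black box in the paragraph that follows. The only information the paper extracts from the \emph{proof} (as opposed to the statement) is that the vectors may be taken inside $\{-1,1\}^d$; this is what allows the translation to a lower bound on $f(d,k,l)$ via rescaling by $1/\sqrt{2d}$. So there is no ``paper's own proof'' against which to compare your proposal.

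That said, your proposal is not a proof either --- it is a plan, and you say so yourself. You correctly set up the Ramsey reduction (no independent set of size $m\approx c_l k^{1/l}$ in the orthogonality graph forces a $K_{l+1}$ in every $(k+1)$-set), and you correctly identify that the whole content of the theorem lies in the ``core estimate'' on the probability that a random $m$-subset is pairwise non-orthogonal. But you leave that estimate unproved, explicitly flagging it as the main obstacle; without it the argument has no quantitative conclusion. Note also that your proposed model of $0/1$-vectors of fixed weight (orthogonal iff disjoint supports) is not the one Alon and Szegedy use and is not the one the present paper needs: the paper relies on the fact that the witnessing set lies in $\{-1,1\}^d$, so that orthogonality corresponds exactly to unit distance after scaling. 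If you intend to reprove the theorem for use here, you would need either to carry out the $\pm1$ construction or to add a separate argument translating your disjoint-support vectors into an almost-equidistant configuration.
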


Let $d$, $k$, and $l$ be positive integers with $d \geq 2\log{k}$ and with $k$ sufficiently large with respect to $l$.
It follows from the proof of Theorem~\ref{thm:AlonSzegedy} that there is a $\delta=\delta(l)>0$ and a subset $F=F(d,k,l)$ of $\{-1,1\}^d$ of size at least $d^{\delta\log{(k+2)}/\log{\log{(k+2)}}}$ such that among any $k+1$ vectors from $F$ there are at least $l+1$  pairwise orthogonal vectors.
We define the set $P_F=\frac{1}{\sqrt{2d}} \cdot F = \setbuilder{p_v=(v_1/\sqrt{2d},\dots,v_d/\sqrt{2d})}{v=(v_1,\dots,v_d) \in F}$.
Clearly, $|P_F|=|F|$.
Since $F \subseteq \{-1,1\}^d$, it is not difficult to verify that any two points $p_u$ and $p_v$ from $P_F$ are at unit distance if and only if  the vectors $u$ and $v$ from $F$ are orthogonal.
It follows that for every fixed positive integer $l$ there are some $\delta=\delta(l)>0$ and $k_0(l)$ such that for every $k \ge k_0(l)$ and every $d \geq 2\log{k}$,
\[f(d,k,l) \geq d^{\delta\log{(k+2)}/\log{\log{(k+2)}}}.\]

Let $d,k,l$ be positive integers.
The following simple argument, which is based on an estimate on the chromatic number of $\mathbb{R}^d$, gives an upper bound on $f(d,k,l)$ that is linear in $k$ and exponential in $d$.

Let $P$ be a set of points in $\mathbb{R}^d$ such that among any $k+1$ points from $P$ there are $l+1$ points that are pairwise at unit distance.
Let $G$ be the unit-distance graph for $P$.
Let $c$ be a colouring of $G$ with $m \colonequals \chi(G)$ colours and let $P=C_1 \cup \cdots \cup C_m$ be the colour classes  induced by~$c$.
For every unit-distance graph $H$ in $\mathbb{R}^d$, Larman and Rogers~\cite{Larman-Rogers} showed that $\chi(H) \le (3+o(1))^d$.
We thus have $m \le (3+o(1))^d$.
Since there are at least $l+1 \ge 1$ edges among any set of $k+1$ vertices of $G$, we have $|C_i| \le k$ for every $i \in \{1,\dots,m\}$.
In particular, $|P| \le mk \le (3+o(1))^d k$.
We thus obtain the following estimate.
\begin{proposition}
For any $k\geq 2$,
\[f(d,k,l) \le (3+o(1))^d k.\] 
\end{proposition}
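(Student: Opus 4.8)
The plan is to bound $|P|$ by covering $\mathbb{R}^d$ by colour classes of the unit-distance graph, each of which can be large in cardinality only if it is ``independent enough'' in the sense relevant to the hypothesis. Concretely, let $G$ be the unit-distance graph on $P$ and let $\chi(G)$ be its chromatic number. A proper colouring of $G$ with $m=\chi(G)$ colours partitions $P$ into colour classes $C_1,\dots,C_m$, and within each $C_i$ no two points are at unit distance, so the unit-distance graph restricted to $C_i$ has no edge at all.

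The key step is to observe that the hypothesis on $P$ forces each colour class to be small. If some $C_i$ had at least $k+1$ points, then among those $k+1$ points we would be guaranteed $l+1 \ge 2$ points that are pairwise at unit distance, in particular at least one unit-distance pair inside $C_i$; but $C_i$ is an independent set in the unit-distance graph, a contradiction. Hence $|C_i| \le k$ for every $i$, and therefore $|P| \le m k = \chi(G)\cdot k$.

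To finish, I would invoke the upper bound on the chromatic number of a unit-distance graph in $\mathbb{R}^d$ due to Larman and Rogers~\cite{Larman-Rogers}, namely $\chi(H) \le (3+o(1))^d$ for every unit-distance graph $H$ in $\mathbb{R}^d$ (this is a consequence of a covering of $\mathbb{R}^d$ by a bounded number, per unit volume, of sets of diameter less than $1$). Applying this with $H = G$ gives $m = \chi(G) \le (3+o(1))^d$, and combining with the previous inequality yields $|P| \le (3+o(1))^d k$, which is exactly the claimed bound since $P$ was an arbitrary admissible set.

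There is essentially no obstacle here: the argument is a two-line pigeonhole wrapped around an off-the-shelf colouring bound. The only mild subtlety is making sure the hypothesis is used correctly — it suffices to note that $l+1 \ge 2$ (as $l \ge 1$), so the guaranteed pairwise-unit-distance subset of size $l+1$ within any $k+1$ points contains at least one edge of the unit-distance graph; the precise value of $l$ plays no role, which is why the bound is independent of $l$. It is also worth remarking that this bound is far from tight (it is exponential in $d$, whereas the true growth for $k=2$, $l=1$ is polynomial by Theorem~\ref{thm1}), but it has the virtue of being linear in $k$, complementing the lower bound, which is only logarithmic in $d$ but grows faster than any polynomial in $k$.
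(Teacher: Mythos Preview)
Your proof is correct and follows essentially the same argument as the paper: colour the unit-distance graph with $\chi(G)\le (3+o(1))^d$ colours via the Larman--Rogers bound, observe that each colour class has at most $k$ points (since $l+1\ge 2$ forces an edge among any $k+1$ points), and multiply. The paper's version is terser but identical in substance.
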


\section*{Acknowledgements}
We thank Roman Karasev for explaining to us the argument that the graph in part~(c) of Figure~\ref{fig-abstractD3_10} is not realisable in $\bR^3$, Istv\'an Talata for drawing our attention to his results and those of Bernadett Gy\"orey, Dan Ismailescu for helpful conversations, Imre B\'ar\'any for his support, and an anonymous referee whose suggestions lead to an improved paper.


\end{document}